\newcommand{\ben}{\begin{enumerate}}
\newcommand{\een}{\end{enumerate}}
\newcommand{\bqu}{\begin{quote}}
\newcommand{\equ}{\end{quote}}
\newcommand{\beq}{\begin{equation}}
\newcommand{\eeq}{\end{equation}}
\newcommand{\bec}{\begin{center}}
\newcommand{\ece}{\end{center}}
\newcommand{\ra}{\rightarrow}
\newcommand{\sig}{\sigma}
\newcommand{\Sig}{\Sigma}
\newcommand{\eps}{\epsilon}
\newcommand{\Del}{\Delta}
\newcommand{\lam}{\lambda}
\newcommand{\ah}{\alpha}
\newcommand{\gam}{\gamma}
\newcommand{\Gam}{\Gamma}
\allowdisplaybreaks \setlength{\textwidth}{6.5in}
\numberwithin{equation}{section}
\newtheorem{theorem}{Theorem}[section]
\newtheorem{remark}[theorem]{Remark}
\newtheorem{lemma}[theorem]{Lemma}
\newtheorem{example}[theorem]{Example}
\theoremstyle{definition}
\newtheorem{defn}[theorem]{Definition}
\theoremstyle{remark}
\title[Short title]{Galton-Watson Probability Contraction
\vspace{0.2in}\\
\begin{small}Joel Spencer, \\
Courant Institute of Mathematical Sciences\\
New York University\\
\vspace{0.2in}
Moumanti Podder, \\
Courant Institute of Mathematical Sciences\\
New York University
\end{small}}
\begin{document}
\bibliographystyle{plainnat}

\maketitle

\begin{center}
\section{Abstract}
\emph{We are concerned with exploring the probabilities of first order statements for Galton-Watson trees with $Poisson(c)$ offspring distribution. Fixing a positive integer $k$, we exploit the $k$-move Ehrenfeucht game on rooted trees for this purpose. Let $\Sigma$, indexed by $1 \leq j \leq m$, denote the finite set of equivalence classes arising out of this game, and $D$ the set of all probability distributions over $\Sigma$. Let $x_{j}(c)$ denote the true probability of the class $j \in \Sigma$ under $Poisson(c)$ regime, and $\vec{x}(c)$ the true probability vector over all the equivalence classes. Then we are able to define a natural \emph{recursion function} $\Gamma$, and a map $\Psi = \Psi_{c}: D \rightarrow D$ such that $\vec{x}(c)$ is a \emph{fixed point} of $\Psi_{c}$, and starting with any distribution $\vec{x} \in D$, we converge to this fixed point via $\Psi$ because it is a contraction. We show this both for $c \leq 1$ and $c > 1$, though the techniques for these two ranges are quite different. } 
\end{center}


\vspace{0.3in}
\section{Introduction}

The Galton-Watson tree (henceforth, GW tree) $T=T_c$ with parameter $c > 0$ is a much studied
object.  It is a random rooted tree.  Each node, independently, has $Z$ children where $Z$
has the Poisson distribution with mean $c$. We let $P_{c}$ denote the probability under $T_{c}$. 

We shall examine the first order language on rooted trees.  This consists of a constant
symbol $R$ (the root), equality $v=w$,  and a parent function $\pi[v]$ defined for all 
vertices $v\neq R$.  (Purists may prefer a binary relation $\pi^*[v,w]$, that $w$ is the parent of $w$.)
Sentences must be finite and made up of the usual Boolean connectives ($\neg,\vee,\wedge,\ldots$)
and existential $\exists_v$ and universal $\forall_v$ quantification over vertices.  The quantifier
depth of a sentence $A$ is the depth of the nesting of the existential and universal quantifiers.

\begin{example} \label{no node with one child}
No node has precisely one child.
\beq \label{no one child node}
\neg[\exists_{u,x} \pi[x]=u  \wedge \forall_z [(z\neq x)\Rightarrow \neg \pi[z]=u]].
\eeq
\end{example}

We outline some of our results.
For any first order sentence $A$ set
\beq\label{fsubA} f_A(c) = \Pr[T_c\models A], \eeq
the probabiity that $T=T_c$ has property $A$.  Except in
examples we will work with the {\em quantifier depth} $k$
of $A$.  The value $k$ shall be arbitrary but {\em fixed}
throughout this presentation.
With (\ref{d}) below we decompose any $f_A(c)$
into its ``atomic" $x_j(c)$.  In Section \ref{sectfixedpoint}
we show that the $x_j(c)$ are solutions to a finite system of
equations involving polynomials and exponentials. The
solution is described as the fixed point of a map $\Psi_c$
over the space of distributions $D$ defined by (\ref{b}). In
Theorem \ref{uniquefixed} we show that this system
has a unique solution.  
In Sections \ref{sectsubcrit} (for the
subcritical case) and \ref{sectproofcontraction} (for the
general case) we show that $\Psi_c$ is a contraction.
Employing the Implicit Function Theorem in Section \ref{sectimplicit}
we then achieve one of our main results:

\begin{theorem}\label{a}  Let $A$ be first order.  Then
$f_A(c)$ is a $C^{\infty}(0,\infty)$ function.  
That is, all derivatives of $f_A(c)$
exist and are continuous at all $c > 0$.
\end{theorem}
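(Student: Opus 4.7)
The plan is to apply the implicit function theorem to the fixed-point equation $\vec{x}(c) = \Psi_c(\vec{x}(c))$. Writing $S_A \subset \Sig$ for the set of equivalence classes whose canonical representatives satisfy $A$, one has $f_A(c) = \sum_{j \in S_A} x_j(c)$, so it suffices to show that each coordinate $x_j(\cdot)$ is $C^\infty$ on $(0,\infty)$.

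Fix $c_0 \in (0,\infty)$. Let $H = \{\vec{y} \in \mathbb{R}^m : \sum_j y_j = 1\}$ with tangent space $T_0 = \{v \in \mathbb{R}^m : \sum_j v_j = 0\}$, and let $U \subset H$ be an open neighborhood of $\vec{x}(c_0)$. Define $F: U \times (0,\infty) \to T_0$ by $F(\vec{y},c) = \vec{y} - \Psi_c(\vec{y})$; since $\Psi_c$ preserves normalization the image indeed lies in $T_0$. I claim $F$ is $C^\infty$ (in fact real-analytic). By Poisson thinning, if the root has Poisson$(c)$ children each independently receiving class $j'$ with probability $y_{j'}$, then the child-counts $N_{j'}$ are independent Poisson$(c y_{j'})$; and by the Ehrenfeucht analysis at depth $k$, the root's class is determined by the capped vector $(\min(N_{j'},k))_{j' \in \Sig}$. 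Consequently each $\Psi_c(\vec{y})_j$ is a finite sum of products of terms of the form $e^{-c y_{j'}}(c y_{j'})^n/n!$ for $n < k$ together with tail terms $1 - \sum_{n < k} e^{-c y_{j'}}(c y_{j'})^n/n!$, each manifestly real-analytic in $(\vec{y}, c)$.

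The crux is invertibility of the partial Jacobian $\partial F/\partial \vec{y} = I - D_{\vec{y}}\Psi_{c_0}$ as an endomorphism of $T_0$, evaluated at $\vec{x}(c_0)$. For this I invoke the contraction established in Sections \ref{sectsubcrit} and \ref{sectproofcontraction}: $\Psi_{c_0}: D \to D$ is Lipschitz with some constant $L = L(c_0) < 1$ in the norm $\|\cdot\|_*$ used there. Differentiating along any tangent $v \in T_0$ yields $\|D_{\vec{y}}\Psi_{c_0}(\vec{x}(c_0))\,v\|_* \leq L \|v\|_*$, so every eigenvalue $\mu$ of the finite-dimensional operator $D_{\vec{y}}\Psi_{c_0}(\vec{x}(c_0))|_{T_0}$ satisfies $|\mu| \leq L < 1$. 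In particular $1$ is not an eigenvalue, and $I - D_{\vec{y}}\Psi_{c_0}$ is invertible on $T_0$.

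The implicit function theorem then furnishes a $C^\infty$ curve $c \mapsto \vec{y}(c)$ near $c_0$ solving $F(\vec{y}(c),c) = 0$; by the uniqueness statement of Theorem \ref{uniquefixed} this curve coincides with $\vec{x}(c)$ on some neighborhood, so $\vec{x}$ is $C^\infty$ at $c_0$. Since $c_0 \in (0,\infty)$ was arbitrary, each $x_j$, and hence $f_A = \sum_{j \in S_A} x_j$, is $C^\infty$ on $(0,\infty)$. The main obstacle I anticipate is the careful bookkeeping that links the contraction of $\Psi_c$ on the simplex $D$ to invertibility of the Jacobian \emph{on the tangent space} $T_0$: one must be sure that the contraction rules out the eigenvalue $1$ of $D_{\vec{y}}\Psi_{c_0}$ restricted to $T_0$, rather than merely bounding an ambient operator norm on $\mathbb{R}^m$, and one must confirm that the (possibly non-Euclidean) norm $\|\cdot\|_*$ in which contraction is proved dominates a Euclidean norm enough to transfer the spectral bound.
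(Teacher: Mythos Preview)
Your approach matches the paper's Section \ref{sectimplicit}: reduce to smoothness of each $x_j(c)$ via $f_A(c)=\sum_{j\in S(A)}x_j(c)$, observe that $\Psi_c(\vec{x})$ is an explicit finite combination of polynomials and exponentials in $(\vec{x},c)$, and apply the Implicit Function Theorem to $F(\vec{x},c)=\Psi_c(\vec{x})-\vec{x}$, with the contraction result supplying invertibility of $I-D\Psi_c$ at the fixed point.

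One correction is needed. You write that ``$\Psi_{c_0}$ is Lipschitz with constant $L<1$,'' but Theorem \ref{thmcontract} only asserts this for an \emph{iterate} $\Psi_{c_0}^s$ (in the Euclidean metric). The paper bridges this gap just as you should: if $A=D\Psi_c(\vec{x}(c))$, then by the chain rule $A^s=D(\Psi_c^s)(\vec{x}(c))$ has operator norm at most $\alpha<1$, so every eigenvalue $\mu$ of $A$ satisfies $|\mu|^s\le\alpha$, whence $|\mu|<1$ and $(I-A)^{-1}=\sum_{u\ge 0}A^u$ converges. Your worry about $T_0$ versus $\mathbb{R}^m$ is legitimate but harmless here, since the Euclidean-norm bound on $A^s$ already controls the spectral radius of $A$ on any invariant subspace.
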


\begin{remark} 
Let $y=g(c)$ be the probability that $T=T_c$ is infinite.
It is well known that $g(c)=0$ for $c\leq 1$ while for $c > 1$, $y=g(c)$ is
the unique positive real satisfying $e^{-cy}=1-y$.  The value $c=1$ is often
referred to as a critical, or percolation, point for GW-trees.  The function
$g(c)$ is {\em not} differentiable at $c=1$.  The right sided derivative
$\lim_{c\ra 1^+} (g(c)-g(1))/(c-1)$ is $2$ while the left sided derivative
is zero.  An interpretation of Theorem \ref{a} that we favor is that
the critical point $c=1$ cannot be seen through a First Order lens.
Theorem \ref{a} thus yields that the property of $T$ being infinite is
not expressible in the first language -- though this can be shown with
a much weaker hammer!

\begin{center}
            \includegraphics[width=14cm]{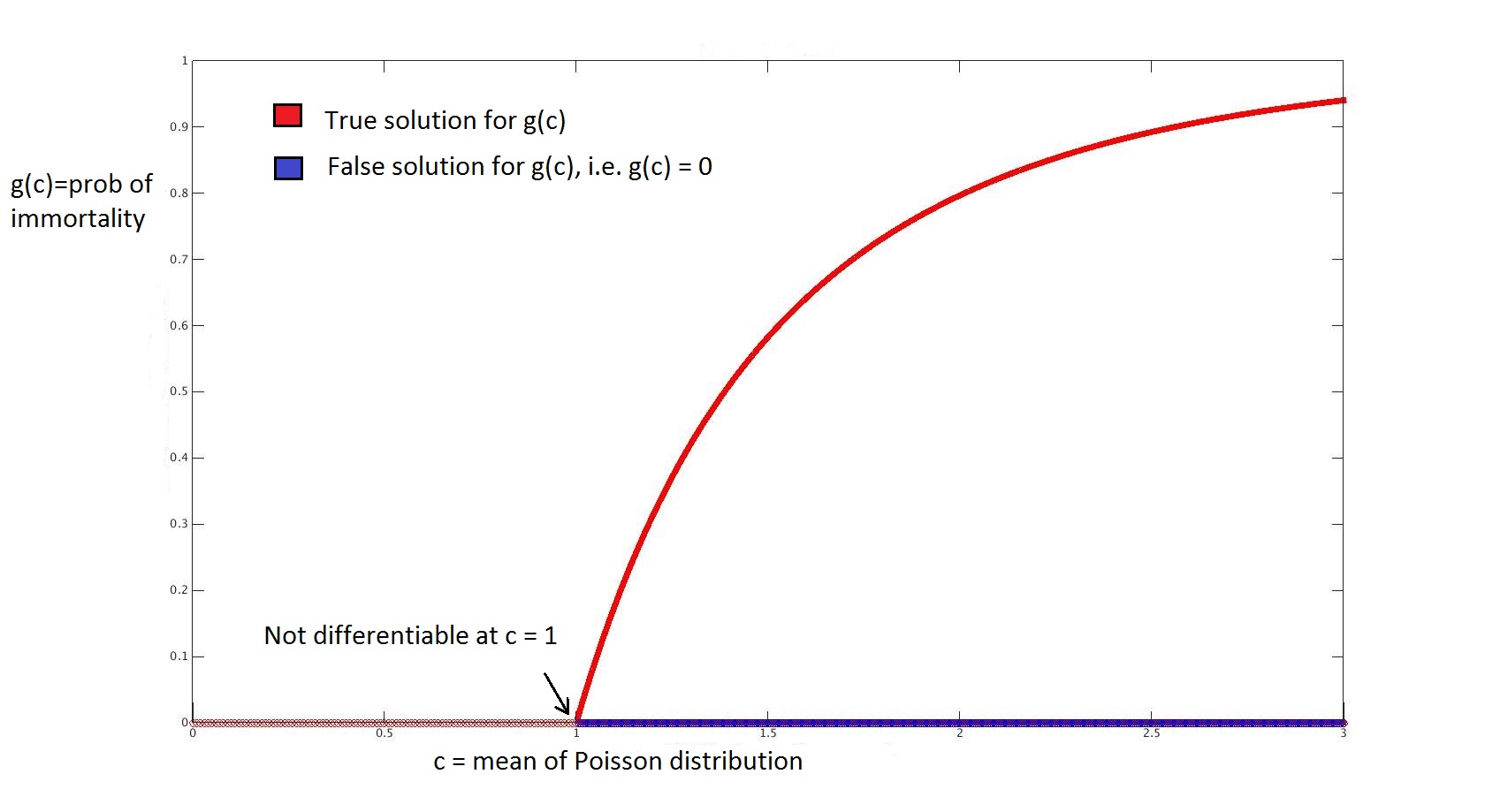}
        \end{center}
The plot clearly shows how the function is not differentiable at $c = 1$, and how the solution is non-unique for $c > 1$.
\par But the plot corresponding to the property in Example \ref{no node with one child} shows that the probability is a smooth function of $c$, which is in keeping with Theorem \ref{a}.

\begin{center}
            \includegraphics[width=13cm]{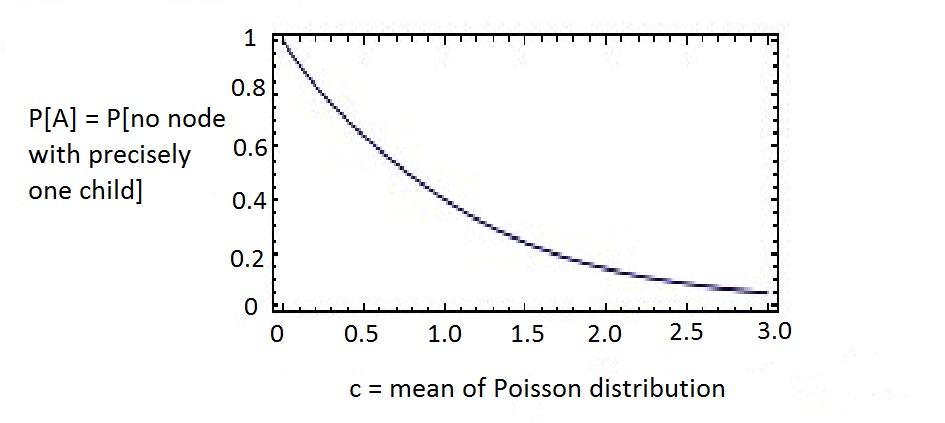}
        \end{center}
\end{remark}

\begin{defn}\label{deftree}
With $v\in T$, $T(v)$ denotes the rooted
tree consisting of $v$ and all of its descendents, with $v$ regarded
as the root.  For $s$ a nonnegative integer, $T|_{s}$ denotes the rooted tree consisting of its vertices up to generation at most $s$.  We call $T|_{s}$ the $s$-cutoff of $T$. (This is defined even if no vertices are at generation $s$.) $T(v)|_{s}$ denotes the $s$-cutoff of $T(v)$.
\end{defn}

\section{The Ehrenfeucht Game}
\subsection{Equivalence Classes}

Let $k$ denote an arbitrary positive integer. 
We may then define an equivalence relation $\equiv_k$
(we often omit the subscript) on all trees $T$, as follows.
\begin{defn}\label{equivk}
$T\equiv_k T'$ if they have the same truth value for
all $A$ of quantifier depth at most $k$.  Equivalently
$T\equiv_k T'$ if Duplicator wins the $k$-move Ehrenfeucht
game $EHR[T,T',k]$.  $\Sig=\Sig_k$ denotes the set
of all equivalence classes.  
\end{defn}

Critically, $\Sig_k$ is a
{\em finite} set.  As a function of $k$ we note that
$|\Sig_k|$ grows like a tower function.  We give
\cite{01} as a general reference to these basic results.

\begin{defn}\label{defev}
For any rooted tree $T$, the Ehrenfeucht value of $T$,
denoted by $EV[T]$, is that equivalence class $\sigma \in \Sigma$ to which $T$ belongs.
\end{defn}

For convenience we denote
the elements of $\Sig$ by $\Sig=\{1,\ldots,m\}$.
We let $D\subset \mathbb{R}^m$ denote the set of all possible probability distributions
over $\Sig$.  That is,
\beq\label{b} D = \{(x_1,\ldots,x_m): \sum_{j=1}^{m} x_j=1 \text{ and all } x_j\geq 0 \}.
\eeq
(We let $\vec{x}(c)$ denote the probability distribution for the equivalence class of $T=T_c$.) Recall that $P_{c}$ denotes the probability under $T_{c}$, the GW process with $Poisson(c)$ offspring. Let 
\begin{equation} \label{c}
x_{j}(c) = P_{c}(j) = P[EV[T_{c}] = j], \quad j \in \Sigma.
\end{equation}
Then $\vec{x}(c) = \{x_{j}(c): 1 \leq j \leq m\}$ denotes the probability vector in $D$ under $P_{c}$.


\begin{theorem}\label{e} 
$\vec{x}(c)$ has derivatives of all orders.  In particular,
each $x_i(c)$ has derivatives of all orders.
\end{theorem}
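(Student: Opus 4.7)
The plan is to obtain $\vec{x}(c)$ as an implicit function via the fixed-point equation $\vec{x} = \Psi_c(\vec{x})$, exactly as announced for Section \ref{sectimplicit}. Set
\[
F(c, \vec{x}) = \vec{x} - \Psi_c(\vec{x}),
\]
so that by Theorem \ref{uniquefixed} $\vec{x}(c)$ is the unique zero of $F(c,\cdot)$ in $D$. The first step is to observe that $F$ is jointly smooth: the map $\Psi_c$ is built by conditioning on the number of children of the root (Poisson with mean $c$) and multiplying probabilities that the subtrees rooted at the children take prescribed equivalence values. Consequently each coordinate $\Psi_c^{(j)}(\vec{x})$ is a finite sum of terms of the form $e^{-c}c^n/n!$ times a multinomial in the entries of $\vec{x}$, and so $F$ extends to a $C^\infty$ function on $(0,\infty) \times \mathbb{R}^m$.

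Second, I would apply the Implicit Function Theorem after accounting for the constraint $\sum_j x_j = 1$. The natural thing is to restrict to the tangent hyperplane $H = \{\vec{v} \in \mathbb{R}^m : \sum_j v_j = 0\}$, which is preserved by $D\Psi_c$ since $\Psi_c$ maps $D$ into $D$. What is needed is invertibility of $I - D\Psi_c|_H$ at $\vec{x}(c)$. This is exactly where the contraction results of Sections \ref{sectsubcrit} and \ref{sectproofcontraction} do the work: if $\Psi_c$ is Lipschitz on $D$ with constant $\lambda(c) < 1$ in some norm, then the induced operator norm of $D\Psi_c|_H$ is at most $\lambda(c)$, so $I - D\Psi_c|_H$ is invertible with inverse controlled by a Neumann series with ratio $\lambda(c)$. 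The IFT then yields, for any fixed $c_0 > 0$, a $C^\infty$ branch of solutions to $F = 0$ in a neighborhood of $c_0$; by the uniqueness assertion of Theorem \ref{uniquefixed}, this branch must coincide with $\vec{x}(c)$, giving $C^\infty$ regularity on all of $(0,\infty)$.

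The main obstacle I anticipate is cleanly passing from the contraction estimate, which is a statement about the \emph{global} map $\Psi_c$ on the compact simplex $D$, to the operator norm of its \emph{linearization}. On an open set this is immediate from the mean value theorem; on $D$ one either establishes the Lipschitz bound in a norm on an open neighborhood of $D$ (using that the extension of $\Psi_c$ to $\mathbb{R}^m$ above is real analytic), or verifies the bound directly from the difference quotients $t^{-1}(\Psi_c(\vec{x} + t\vec{v}) - \Psi_c(\vec{x}))$ for $\vec{v} \in H$ and small $t$ such that $\vec{x} + t\vec{v}$ remains in $D$, then lets $t \to 0$. Once this technical point is settled, the remainder is a standard IFT bookkeeping exercise.
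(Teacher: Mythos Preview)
Your approach is essentially the paper's own: smoothness of $\Delta$, uniqueness of the fixed point, contraction, and then the Implicit Function Theorem with a Neumann series for $(I - D\Psi_c)^{-1}$. One adjustment is needed: Theorem~\ref{thmcontract} only asserts that some \emph{iterate} $\Psi_c^s$ is a contraction (with constant $\alpha<1$), not $\Psi_c$ itself, so the Lipschitz bound you invoke gives $\|(D\Psi_c)^s|_H\|\le\alpha$ rather than $\|D\Psi_c|_H\|<1$; this still forces the spectral radius of $D\Psi_c$ below $\alpha^{1/s}<1$, which is exactly what the paper uses to conclude invertibility of $I-A$. Your explicit restriction to the tangent hyperplane $H$ is in fact more careful than the paper's argument, which passes over this point.
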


The proof of Theorem \ref{e} is a goal of this paper, accomplished
only in Section \ref{sectimplicit} after many preliminaries.
Any first order sentence of quantifier depth $A$ is determined, tautologically,
by the set $S(A)$ of those $j\in\Sig$ such that all $T$ with $EV[T]=j$
have property $A$.  For any $j\in\Sig$ either all $T$ with $EV[T]=j$ or
no $T$ with $EV[T]=j$ have property $A$.  We may therefore decompose the $f_A(c)$
of (\ref{fsubA}) into 
\beq\label{d} f_A(c) = \sum_{j\in S(A)} x_j(c).  \eeq
Theorem \ref{a} will therefore follow from Theorem \ref{e}.

\subsection{Recursive States}

In the $k$-move Ehrenfeucht game,  values $\geq k$
are roughly all ``the same."  This will be made precise in the subsequent Theorem \ref{g}. We define
\beq\label{f}  C = \{0,1,\ldots,k-1,\omega\}
\eeq
The phrase ``there are $\omega$ copies" is to be interpreted
as ``there are $\geq k$ copies."  
We call $v\in T$ a {\em rootchild} if its parent is the root $R$.
For $w\neq R$ we say $v$ is the {\em rootancestor} of $w$ 
if $v$ is that unique rootchild with $w\in T(v)$. Of course, a rootchild is its own rootancestor.  

Theorem \ref{g} roughly states  that the Ehrenfeucht value
of a tree $T$ is determined by the Ehrenfeucht values $EV[T(v)]$
for all the rootchildren $v$. To clarify: $\omega$ rootchildren 
means at least $k$ rootchildren while $n$ rootchildren, $n\in C$,
$n\neq\omega$ means precisely $n$ rootchildren.

\begin{theorem}\label{g} 
Let $\vec{n}=(n_1,\ldots,n_m)$ with all $n_j\in C$.
Let $T$ have the property that for all $1\leq j\leq m$ there are
$n_j$ rootchildren $v$ with $EV[T(v)]=j$.
Then $\sig=EV[T]$ is uniquely determined.
\end{theorem}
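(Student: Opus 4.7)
The plan is to reduce Theorem \ref{g} to the statement that any two rooted trees $T,T'$ with the same vector $\vec{n}$ are $\equiv_k$-equivalent; by Definition \ref{equivk} this is precisely what it means for them to share an Ehrenfeucht value, so uniqueness of $EV[T]$ given $\vec{n}$ follows. I would prove the equivalence by exhibiting a Duplicator winning strategy in $EHR[T,T',k]$, built out of (a) a matching between the rootchildren of $T$ and those of $T'$ and (b) the sub-strategies on matched subtrees.

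At the outset, for each $j\in\Sigma$ with $n_j$ finite, Duplicator fixes an arbitrary bijection between the $n_j$ rootchildren of $T$ whose subtree has value $j$ and the $n_j$ such rootchildren of $T'$; for each $j$ with $n_j=\omega$ the matching is left empty and is enlarged on demand. For every pair $(v,v')$ that comes to be matched she commits in advance to her winning strategy for $EHR[T(v),T'(v'),k]$, which exists because $EV[T(v)]=EV[T'(v')]$ and which remains winning if only $r\le k$ moves actually occur inside that pair. During play: if Spoiler selects a root, Duplicator plays the other root; otherwise Spoiler's vertex lies in some $T(v)$ or $T'(v')$, and if the corresponding rootancestor is still unmatched Duplicator first picks an unused rootchild of class $EV[T(v)]$ on the other side and enlarges the matching, after which she makes the move dictated by the sub-strategy for the relevant pair. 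In the $n_j=\omega$ case such an unused rootchild always exists, because both sides carry $\ge k$ candidates while strictly fewer than $k$ of them have yet been used.

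After $k$ rounds one checks that the induced correspondence preserves the atomic vocabulary: equality, the root constant, and the parent function. Equality and the root-rule are immediate, since $R\leftrightarrow R$ and vertices living in distinct matched subtrees are vertex-disjoint. The parent function is preserved inside each matched pair by the sub-strategy; across distinct matched pairs the only parent-fact among selected vertices is $\pi[v]=R$ for a chosen rootchild $v$, which holds symmetrically in $T'$ by construction. The principal obstacle is the bookkeeping in the $\omega$ case---verifying that Duplicator never runs out of fresh class-$j$ rootchildren within $k$ moves---and it is exactly this counting need that forces the alphabet $C$ of (\ref{f}) to truncate at $k$; once the count is in hand, the rest of the argument is a routine parallel composition of sub-game strategies on disjoint subtrees.
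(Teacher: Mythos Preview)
Your proposal is correct and follows essentially the same approach as the paper: both arguments exhibit a Duplicator strategy for $EHR[T,T',k]$ by matching rootchildren of equal Ehrenfeucht value and delegating play within each matched pair to the guaranteed sub-strategy, with the $n_j=\omega$ case handled by the same counting observation that at most $k-1$ rootchildren have been committed before any given move. The only cosmetic differences are that you pre-fix the bijection for finite $n_j$ (the paper matches all classes on demand via the notion of a ``free'' rootchild), you treat the root move explicitly, and you spell out the atomic-vocabulary verification at the end; none of these changes the substance of the argument.
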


\begin{defn}\label{def1}  Let
\beq\label{defrec}
\Gam: \{(n_1,\ldots,n_m):n_i\in \{0,1,\ldots,k-1,\omega\} \}\ra \Sig
\eeq
be given by $\sig=\Gam(\vec{n})$ with $\vec{n},\sig$ satisfying
the conditions of Theorem \ref{g}.  Then $\Gam$ is called the
{\em recursion function}.
\end{defn}

\begin{proof}[Proof of Theorem \ref{g}]
 Let $T,T'$ have the same $\vec{n}$.  We give a strategy
for Duplicator in the Ehrenfeucht game $EHR[T,T',k]$.  Duplicator will
create a partial matching between the rootchildren $v\in T$
and the rootchildren $v'\in T'$.  When $v,v'$ are matched, $EV[T(v)]=EV[T'(v')]$.
At the end
of any round of the game call a rootchild $v\in T$ (similarly $v'\in T'$) 
{\em free} if no $w\in T(v)$ has yet been selected.

Suppose Spoiler plays $w\in T$ (similarly $w'\in T'$) with rootancestor $v$.
Suppose $v$ is free.  
Duplicator finds a free $v'\in T'$ with $EV(T(v))=EV(T'(v'))$.
(As the number for each class is the same for $T,T'$ this may be done
when $n_j\neq \omega$.)
When $EV[T(v)] = j \in \Sigma$ and $n_{j} \neq \omega$, then as the number of rootchildren of $T$ with Ehrenfeucht value $j$ is exactly the same as that in $T'$, hence this can be done. In the special case where $n_j=\omega$ the vertex $v'$ may be found as there
have been at most $k-1$ moves prior to this move and so there are at most $k-1$ rootchildren $v'$ with $EV[T(v')] = j$ that are
not free.  Duplicator then matches $v, v'$.  Duplicator can win $EHR(T(v),T(v'),k)$
as $EV[T(v)]=EV[T'(v')]$.  He employs that strategy to find
a response $w'\in T(v')$ corresponding to $w \in T(v)$.  Once $v,v'$ have been matched any move  $z\in T(v)$ 
(similarly $z'\in T(v')$) is responded to with a move in $z'\in T(v')$ using
the strategy for $EHR(T(v),T(v'),k)$.
\end{proof}

\begin{remark}  
Tree automata consist of a finite state space $\Sig$, an integer $k\geq 1$,
a map $\Gam$ as in (\ref{defrec}) and a notion of accepted states.  While first order
sentences yield tree automata, the notion of tree automata is broader.  Tree automata
roughly correspond to second order monadic sentences, a topic we hope to explore
in future work.
\end{remark}

\subsection{Solution as Fixed Point}\label{sectfixedpoint}

We come now to the central idea.
We define, for $c > 0$, a map $\Psi_c: D\ra D$.  Let $\vec{x}=(x_1,\ldots,x_m)\in D$,
a probability distribution over $\Sig$.  Imagine root $R$ has Poisson mean $c$ children.  
To each child we assign, independently, a $j\in\Sig$ with distribution $\vec{x}$.
Let $n_j \in C$ be the number of children assigned $j$.  Let
$\vec{n}=(n_1,\ldots,n_m)$.  Apply the recursion function (equation \ref{defrec}) $\Gamma$ to get
$\sig=\Gam(\vec{n})$, the Ehrenfeucht value of the root $R$.  Then $\Psi_c(\vec{x}) \in D$
is the distribution thus induced on the Ehrenfeucht value of $R$.

\par The special nature of the Poisson distribution allows a concise expression. When the initial distribution is $\vec{x}$,
the number of chilren assigned $j$ will have a Poisson distribution with
mean $cx_j$ {\em and} these numbers are mutually independent over $j\in\Sig$.
Thus
\beq\label{h1}
\Pr[n_j=u] = e^{-cx_j}\frac{(cx_j)^u}{u!} \quad \text{for } u \in C, u\neq \omega,
\eeq
and 
\beq\label{h2}
\Pr[n_j=\omega] = 1 - \sum_{u=0}^{k-1}\Pr[n_j=u].
\eeq
From the independence, for any $\vec{a}=(a_1,\ldots,a_m)$
with $a_1,\ldots,a_m\in C$,
\beq\label{h3}
\Pr[\vec{n}=\vec{a}] = \prod_{j=1}^m \Pr[n_j=a_j]. 
\eeq
Thus, writing $\Psi_c(x_1,\ldots,x_m)=(y_1,\ldots,y_m)$,
\beq\label{h4}
y_j = \Sig \Pr[\vec{n}=\vec{a}]
\eeq
where the summation is over all $\vec{a}$ with $\Gamma(\vec{a})=j$.

We place all $\Psi_c$ into a single map $\Del$:
\beq\label{i} \Del: D\times (0,\infty)\ra D \mbox{ by } \Del(\vec{x},c)=\Psi_c(\vec{x}).
\eeq
Setting $\Del(x_1,\ldots,x_m,c)=(y_1,\ldots,y_m)$, the $y_j$ are finite sums of products
of polynomials and exponentials in the variables $x_1,\ldots,x_m,c$.  In particular,
all partial derivatives of all orders exist everywhere.

Recall (\ref{c}),  $\vec{x}(c)$ denotes the probability distribution for the equivalence classes under the probability measure $P_{c}$ $T=T_c$.  

\begin{theorem}\label{j} $\vec{x}(c)$ is a fixed point for $\Psi_c:D\ra D$.  That is,
$\Psi_c(\vec{x}(c)) = \vec{x}(c)$.  
\end{theorem}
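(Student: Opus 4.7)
The plan is to exploit the well-known branching property of Galton-Watson trees: $T_c$ can be constructed by first sampling $Z \sim \text{Poisson}(c)$ rootchildren and then, conditional on $Z$, attaching an independent copy of $T_c$ at each rootchild. From this decomposition I will read off $EV[T_c]$ in terms of the recursion function $\Gamma$, then recognize the resulting distribution as $\Psi_c(\vec{x}(c))$.

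First I would fix the enumeration of the rootchildren $v_1,\ldots,v_Z$ and observe that the subtrees $T_c(v_1),\ldots,T_c(v_Z)$ are i.i.d.\ copies of $T_c$. Hence the Ehrenfeucht values $EV[T_c(v_i)]$ are i.i.d.\ $\Sigma$-valued random variables with distribution $\vec{x}(c)$, by the definition in (\ref{c}). Letting $n_j$ be the number of rootchildren $v$ with $EV[T_c(v)] = j$ (capped in the symbol $\omega$ whenever the count reaches $k$), Theorem \ref{g} gives $EV[T_c] = \Gamma(\vec{n})$. Therefore the distribution of $EV[T_c]$ equals the distribution of $\Gamma(\vec{n})$.

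Next I would verify that the joint law of $\vec{n}$ is exactly the one used to define $\Psi_c(\vec{x}(c))$ in (\ref{h1})--(\ref{h3}). This is the standard Poisson-splitting (thinning) fact: if $Z \sim \text{Poisson}(c)$ and each of the $Z$ items is independently assigned a label $j \in \Sigma$ with probability $x_j(c)$, then the uncapped counts $N_j$ are independent $\text{Poisson}(c x_j(c))$ random variables. Capping each $N_j$ at $\omega$ (i.e., identifying all values $\geq k$) then yields the capped vector $\vec{n}$ with marginals exactly as in (\ref{h1})--(\ref{h2}) and joint law as in (\ref{h3}). Combining this with the preceding paragraph, the $j$-th coordinate of the distribution of $EV[T_c]$ is
\[
P_c\bigl[EV[T_c] = j\bigr] = \sum_{\vec{a}:\,\Gamma(\vec{a})=j} \Pr[\vec{n}=\vec{a}],
\]
which matches (\ref{h4}) and thus equals the $j$-th coordinate of $\Psi_c(\vec{x}(c))$. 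Since the left-hand side is $x_j(c)$ by definition, this gives $\Psi_c(\vec{x}(c)) = \vec{x}(c)$.

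The only mildly delicate point is ensuring that the branching property is being applied correctly: one must know that the $T_c(v_i)$ really are i.i.d.\ copies of $T_c$, which is immediate from the inductive construction of the GW tree. The combinatorial heart, Theorem \ref{g}, is already in hand, and the distributional statement then reduces to the Poisson-thinning computation. No new obstacle should arise.
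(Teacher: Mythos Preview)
Your proposal is correct and follows essentially the same argument as the paper: both identify the distribution of $EV[T_c]$ via the branching decomposition (rootchild subtrees are i.i.d.\ copies of $T_c$), invoke Theorem~\ref{g} to write $EV[T_c]=\Gamma(\vec{n})$, and use Poisson thinning to match the law of $\vec{n}$ with the one defining $\Psi_c(\vec{x}(c))$ in (\ref{h1})--(\ref{h4}). Your write-up is somewhat more explicit about the branching property and the thinning step than the paper's compressed version, but the route is the same.
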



\begin{proof}
To show that $\vec{x}(c)$ is a fixed point of iteration $\Psi_{c}$, we start with the initial probability vector $\vec{x} = \vec{x}(c)$. Once we perform the iteration, using the definition of the recursive function $\Gamma$ from \eqref{defrec}, we know, for any $j \in \Sigma$, 
\begin{align}
\Psi_{c}(\vec{x}(c))(j) =& \sum_{\vec{a}: \Gamma(\vec{a}) = j} \prod_{i=1}^{m} P[n_{i} = a_{i}| \vec{x} = \vec{x}(c)], \quad \text{from } \eqref{h3}, \nonumber\\
=& \sum_{\vec{a}: \Gamma(\vec{a}) = j} \prod_{i=1}^{m} P[Poisson(c \cdot x_{i}(c)) = a_{i}] \nonumber\\
=& \sum_{\vec{a}: \Gamma(\vec{a}) = j} P_{c}[\vec{n} = \vec{a}] \nonumber\\
=& P_{c}[j] = x_{j}(c), \nonumber
\end{align}
where recall that $P_{c}$ is the probability induced under the $Poisson(c)$ offspring distribution.
\end{proof}

\begin{example}  
For many particular $A$ the size of $\Sigma$, which may be 
thought of as the state space, may be reduced considerably.  Let $A$ be
the property given in \eqref{no one child node}, that no node has precisely one child.  We define
state $1$, that $A$ is true and state $2$, that $A$ is false.  We set
$C=\{0,1,\omega\}$ with $\omega$ meaning ``at least two."  Let $n_1,n_2\in C$
be the number of rootchildren $v$ with $T(v)$ having state $1,2$ respectively.
Then $T$ is in state $1$ if and only if $\vec{n}=(n_1,n_2)$ has one of
the values $(0,0),(\omega,0)$.  Let $D$ be the set of distributions on the
two states, $D=\{(x,y):0\leq x\leq 1, y=1-x\}$.  Then $\Psi_c(x,y)=(z,w)$
with $w=1-z$ and
\begin{equation} \label{k1}
z = e^{-cx} e^{-cy} + e^{-cy}[1 - e^{-cx} - (cx)e^{-cx}] = e^{-cy}[1 - (cx) e^{-cx}].
\end{equation}
The fixed point $(x,y)$ then has $x=\Pr[A]$ satisfying the equation
\begin{equation} \label{k2}
x = e^{-c(1-x)}[1 - (cx)e^{-cx}].
\end{equation}

\end{example}

\begin{example}
Let $A$ be that there is a vertex $v$ with 
precisely one child who has precisely one child.  Let state $1$
be that $A$ is true.  Let state $2$ be that $A$ is false but
that the root has precisely one child.  Let state $3$ be all
else.  Set $C=\{0,1,\omega\}$.  Set $D=\{x,y,z:x+y+z=1, x \geq 0, y \geq 0, z \geq 0\}$.
$T$ is in state $1$ if and only if $\vec{n}=(n_1,n_2,n_3)$ has
either $n_1\neq 0$ or $n_1=0,n_2=1,n_3=0$.  $T$ is in state $2$
if and only if $\vec{n}=(0,0,1)$.  Then
$\vec{x}(c)=(x,y,z)$ must satisfy the system (noting $z=1-x-y$ is
dependent)
\beq\label{l1}
x = (1-e^{-cx})+ e^{-cx}(cye^{-cy})e^{-cz} = 1-e^{-cx}+cye^{-c}
\eeq
\beq\label{l2}
y = e^{-cx}e^{-cy}(cze^{-cz})= cze^{-c}
\eeq
Here $x=\Pr[A]$.  In general, however, $\Pr[A]$ will be the
sum (\ref{d}).
\end{example}

\section{The Contraction Formulation}

\subsection{The Total Variation Metric}

On $D$ we let $\rho(\vec{x},\vec{y})$ denote the usual
Euclidean metric, and $||\vec{x} - \vec{y}||_{1}$ the $L^{1}$ distance.  We let $TV(\vec{x},\vec{y})$ denote
the total variation distance. With $\vec{x}=(x_1,\ldots,x_m)$
and $\vec{y}=(y_1,\ldots,y_m)$ this standard metric
is given by
\beq\label{tvdef}
TV(\vec{x},\vec{y}) = \frac{1}{2}|\vec{x}-\vec{y}|_1 =
\frac{1}{2}\sum_{j=1}^m |x_j-y_j|.
\eeq


\par Total variation distance between any two probability distributions $\mu, \nu$ on the same probability space has a natural interpretation in terms of coupling $\mu$ and $\nu$. Let $p = \displaystyle \sum_{x} \mu(x) \wedge \nu(x)$. Flip a coin that has probability of heads $p$.
\begin{itemize}
\item If it lands heads, then choose a value $Z$ according to the probability distribution
\begin{equation}
F_{min}(x) = \frac{\mu(x) \wedge \nu(x)}{p},
\end{equation}
and set $X = Y = Z$.
\item If the coin lands tails, choose $X$ according to the probability distribution 
\[F_{1}(x) = \left\{
  \begin{array}{lr}
    \frac{\mu(x) - \nu(x)}{TV(\mu, \nu)} & : \text{ if } \mu(x) > \nu(x);\\
    0 & : \text{ otherwise}.
  \end{array}
\right.
\]
Independently choose $Y$ according to the probability distribution
\[F_{2}(x) = \left\{
  \begin{array}{lr}
    \frac{\nu(x) - \mu(x)}{TV(\mu, \nu)} & : \text{ if } \nu(x) > \mu(x);\\
    0 & : \text{ otherwise}.
  \end{array}
\right.
\]
\end{itemize}
Then $X \sim \mu, Y \sim \nu$ and $(X, Y)$ are coupled in such a way that
\begin{equation} \label{optimal coupling}
TV(\mu, \nu) = P[X \neq Y].
\end{equation}
We call such a coupling \emph{optimal}. \\

\par As such, we have the general well-known result:
\begin{theorem}
For any two probability distributions $\mu, nu$ on a common probability space, 
\begin{equation} \label{tv inequality}
TV(\mu, \nu) = \inf\{P[X \neq Y]: (X, Y) \text{ any coupling of } \mu, \nu\}.
\end{equation}
\end{theorem}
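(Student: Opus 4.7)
The plan is to prove the identity by establishing both inequalities, with the construction described above providing the coupling that attains the infimum.

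First I would prove the lower bound $TV(\mu,\nu)\le P[X\neq Y]$ for an arbitrary coupling $(X,Y)$. The observation is that for every measurable set $A$ in the common state space, $\mu(A)-\nu(A)=P[X\in A]-P[Y\in A]\le P[X\in A,\,Y\notin A]\le P[X\neq Y]$. Swapping the roles of $\mu$ and $\nu$ and taking the supremum over $A$ gives the variational characterization $TV(\mu,\nu)=\sup_A|\mu(A)-\nu(A)|\le P[X\neq Y]$, valid for \emph{every} coupling. This already shows that the infimum on the right of \eqref{tv inequality} is at least $TV(\mu,\nu)$.

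Next I would verify that the explicit coupling $(X,Y)$ defined just above attains the bound, which shows the infimum is not only realized but is a minimum. Three things must be checked. First, that $F_{\min}$, $F_1$, $F_2$ are in fact probability distributions: this reduces to the identity $\sum_x(\mu(x)\wedge\nu(x))=p$ together with $\sum_x(\mu(x)-\nu(x))_+=\sum_x(\nu(x)-\mu(x))_+=1-p$, and to the elementary fact that $1-p=TV(\mu,\nu)$ (which itself follows from $\mu(x)+\nu(x)=(\mu(x)\wedge\nu(x))+(\mu(x)\vee\nu(x))$ summed over $x$). Second, that the marginals are correct: conditioning on the coin,
\[
P[X=x]=p\cdot F_{\min}(x)+(1-p)\cdot F_1(x)=(\mu(x)\wedge\nu(x))+(\mu(x)-\nu(x))_+=\mu(x),
\]
and symmetrically $P[Y=x]=\nu(x)$. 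Third, that $P[X\neq Y]=TV(\mu,\nu)$: on heads one has $X=Y$ by construction, while on tails $X$ and $Y$ are drawn independently from $F_1$ and $F_2$, which are supported on the disjoint sets $\{\mu>\nu\}$ and $\{\nu>\mu\}$, forcing $X\neq Y$ almost surely. Hence $P[X\neq Y]=1-p=TV(\mu,\nu)$.

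Combining the two halves yields \eqref{tv inequality}, and moreover shows the infimum is attained. There is no real obstacle here; the only mildly delicate point is the bookkeeping identity $\sum_x(\mu(x)\wedge\nu(x))=1-TV(\mu,\nu)$, and the implicit assumption that the underlying space is at most countable (or more generally that $\mu\wedge\nu$ makes sense as a measure), so that the sums above can be written without measure-theoretic circumlocution; the general case follows by the analogous construction with densities with respect to $\mu+\nu$.
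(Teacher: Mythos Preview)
Your proof is correct and is the standard argument. Note, however, that the paper does not actually supply a proof of this theorem: it describes the optimal coupling $(X,Y)$ in the paragraph preceding the statement, asserts \eqref{optimal coupling}, and then simply refers the reader to \cite{markov chains mixing times}. Your proposal fills in exactly what the cited reference does---the lower bound via $\mu(A)-\nu(A)\le P[X\neq Y]$ for any coupling, and the verification that the explicit construction has the right marginals and achieves $P[X\neq Y]=1-p=TV(\mu,\nu)$---so in substance you are reproducing the proof the paper outsources rather than diverging from it.
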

We refer the reader to \cite{markov chains mixing times} for further reading.

\subsection{The Contraction Theorem}

\begin{theorem}\label{thmcontract} For all $c>0$ there
exists a positive integer $s$ and an $\ah < 1$
such that for all 
$\vec{x},\vec{y}\in D$
\beq\label{m1} \rho(\Psi^s(\vec{x}), \Psi^s(\vec{y}))
\leq \ah \rho(\vec{x},\vec{y}).
\eeq
\end{theorem}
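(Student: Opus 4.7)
The plan is a coupling argument in total variation, transferred afterwards to the Euclidean metric $\rho$. On the finite-dimensional set $D\subset\mathbb{R}^{m}$ the metrics $\rho$ and $TV$ are bi-Lipschitz equivalent with constants depending only on $m$, so if I can produce an $s_{0}\in\mathbb{Z}^{+}$ and an $\alpha_{0}<1$ with $TV(\Psi^{s_{0}}(\vec{x}),\Psi^{s_{0}}(\vec{y}))\le \alpha_{0}\,TV(\vec{x},\vec{y})$ for every $\vec{x},\vec{y}\in D$, then iterating yields $TV(\Psi^{ns_{0}}(\vec{x}),\Psi^{ns_{0}}(\vec{y}))\le \alpha_{0}^{n}\,TV(\vec{x},\vec{y})$; taking $s=ns_{0}$ with $n$ large enough absorbs the bi-Lipschitz constants and delivers the desired $\rho$-contraction with some $\alpha<1$.

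The crucial reformulation is that $\Psi^{s}(\vec{x})$ is precisely the law of $EV[\text{root}]$ when one grows the Poisson$(c)$ GW tree for $s$ generations, assigns each depth-$s$ vertex an independent $\Sigma$-valued label drawn from $\vec{x}$, and then propagates labels upward via the recursion function $\Gamma$ of Definition \ref{def1}. I construct both $T_{\vec{x}}$ and $T_{\vec{y}}$ on a \emph{common} random skeleton and, at each depth-$s$ vertex, use the optimal coupling \eqref{optimal coupling} so that the two labels agree with probability $1-TV(\vec{x},\vec{y})$. Then \eqref{tv inequality} gives
\[
TV(\Psi^{s}(\vec{x}),\Psi^{s}(\vec{y})) \le P[EV_{\vec{x}}(\text{root}) \neq EV_{\vec{y}}(\text{root})],
\]
and it remains to estimate the chance that a depth-$s$ disagreement propagates all the way up.

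In the subcritical regime $c<1$ (Section \ref{sectsubcrit}) a union bound suffices: writing $Z_{s}$ for the size of generation $s$, $P[\text{some depth-}s\text{ vertex disagrees}]\le E[Z_{s}]\cdot TV(\vec{x},\vec{y}) = c^{s}\,TV(\vec{x},\vec{y})$, already a contraction with $\alpha_{0}=c^{s}<1$ for any $s\ge 1$. The borderline case $c=1$ is more delicate because $c^{s}=1$; one uses that disagreements can only propagate on the survival event $\{Z_{s}\ge 1\}$, whose probability decays in $s$, and combines this with the metric equivalence and iteration of $\Psi$ to extract an $\alpha<1$.

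The main obstacle is the supercritical case $c>1$ (Section \ref{sectproofcontraction}), where $E[Z_{s}]=c^{s}\to\infty$ and the union bound is useless: typical realizations carry very many depth-$s$ disagreements. The decisive new ingredient must be the truncation built into $\Gamma$: by Theorem \ref{g}, the EV at a vertex depends only on its $\{0,1,\ldots,k-1,\omega\}$-truncated count vector, and a single child's relabeling alters this vector only when some class crosses the threshold $k$. On the survival event the tree branches exponentially, so by concentration applied at every generation, with high probability every intermediate vertex has either $0$ or $\ge k+1$ children of each class, and single-leaf perturbations are absorbed as they rise. The quantitative heart of the proof is then to show that this per-level absorption rate strictly beats the branching factor $c$, so that the expected number of disagreements surviving up to the root decays geometrically in $s$, yielding $\alpha<1$ for $s$ sufficiently large.
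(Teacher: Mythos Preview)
Your framework is right and matches the paper: pass to $TV$, couple on a common GW skeleton with optimal leaf-couplings, then transfer back to $\rho$ via the $L^1$/$L^2$ equivalence; the subcritical computation $TV(\Psi^{s}(\vec{x}),\Psi^{s}(\vec{y}))\le c^{s}\,TV(\vec{x},\vec{y})$ is exactly Theorem~\ref{n}.

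The supercritical paragraph, however, contains a genuine gap. Your absorption mechanism rests on the assertion that ``by concentration applied at every generation, with high probability every intermediate vertex has either $0$ or $\ge k+1$ children of each class.'' This is false: the number of children of any fixed vertex is $\mathrm{Poisson}(c)$, a law that does not depend on $s$ and admits no concentration as $s\to\infty$. For each intermediate vertex there is a probability bounded away from $0$ (depending only on $c$, $k$, and the current label distribution) that some class-count sits exactly at one of the values $0,\ldots,k$, so a single child flip \emph{does} change the truncated vector $\vec{n}$. You therefore have no way to argue that a per-level survival probability $p$ satisfies $pc<1$; for large $c$ this would require $p<1/c$, and nothing you have written forces that. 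The sentence ``the quantitative heart of the proof is then to show that this per-level absorption rate strictly beats the branching factor $c$'' is precisely the missing idea, not a step that has been carried out.

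The paper's route is structurally different and avoids any per-level balance against $c$. It fixes a finite ``universal'' tree $UNIV_{k}$ (Definition~\ref{defunivk}) with the property that if some $T(v)\cong UNIV_{k}$ then $T$ is $k$-full and hence $EV[T]$ is determined by $T|_{Rad(k)}$ alone (Theorems~\ref{xmas} and~\ref{onlytop}). Writing $s=s_{0}+D_{0}$ with $D_{0}$ the depth of $UNIV_{k}$, the argument first grows $T|_{s_{0}}$ and then, from each of the $t$ vertices at depth $s_{0}$, grows $D_{0}$ more levels. Each such subtree independently equals $UNIV_{k}$ with some fixed probability $1-e^{-\beta}>0$, so $\Pr[\text{no copy of }UNIV_{k}]\le e^{-t\beta}$ (Theorem~\ref{thmuniv}). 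On the complementary ``good'' event the root's $EV$ is determined by $T|_{s_{0}}$ and is \emph{completely insensitive} to the depth-$s$ labels, so the coupled roots agree automatically. On the ``bad'' event one still only pays $E[Y\mathbf{1}_{BAD}]\,\epsilon$, where $Y$ is the number of depth-$s$ vertices; Lemma~\ref{lem2} shows $E[Y\mathbf{1}_{BAD}]\le kte^{-t\kappa}$, which is bounded in $t$ by some $K_{0}$ (Theorem~\ref{thmxy}). Finally $s_{0}$ is chosen large enough that $\Pr[T|_{s_{0}}\text{ not }s_{0}\text{-universal}]\le (2K_{0})^{-1}$, giving $TV(\Psi^{s}(\vec{x}),\Psi^{s}(\vec{y}))\le \tfrac{1}{2}\,TV(\vec{x},\vec{y})$. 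The point is that the exponential \emph{decay in $t$} of the bad event beats the exponential \emph{growth in $t$} of the leaf count, with no appeal to level-by-level absorption.
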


The map $\Psi^s:D\ra D$ has a natural interpretation.  Let
$\vec{x}=(x_1,\ldots,x_m)\in D$.  Generate a random GW
tree $T=T_c$ but stop at generation $s$.  (The root is
at generation $0$.)  To each node (there may not be
any) $v$ at generation $s$ assign independently 
$j\in \Sig$ from distribution $\vec{x}$.  Now we
work up the tree towards the root.  Suppose, formally
by induction, that all $w$ at generation $i$ have
been assigned some $j\in\Sig$.  A $v$ at generation
$i-1$ will then have $n_j$ children assigned $j$
(allowing $n_j=\omega$).  The value at $v$, 
which is now determined by Theorem \ref{g},
is given by the recursion function 
$\Gam(\vec{n})$ of Definition \ref{defrec}.
$\Psi^s(\vec{x})$ will then be the distribution of
the Ehrenfeucht value assigned to the root.

\begin{remark} 
The non-first order property $A$ that $T$
is infinite may be similarly examined.  Set $C=\{0,\omega\}$ ($\omega$ denoting $\geq 1$)
and let state 1 be that $T$ is infinite, state 2 that it is
not.  $T$ is in state 1 if and only if $\vec{n}=(\omega,0)$
or $(\omega,\omega)$.  Then $D=\{(x, 1-x): 0 \leq x \leq 1\}$ and
\beq\label{psiinf}
\Psi_c(x,1-x) = (1-e^{-cx},e^{-cx}).
\eeq
However, $\Psi_c$ has two fixed points: $(0,1)$ and the 
``correct" $(x(c),1-x(c))$ when $c > 1$.  The contraction
property (\ref{m1}) will not hold.  With $\eps$
small, $1-e^{-c\eps}\sim c\eps$ and so $\vec{x}=(0,1)$,
and $\vec{y}=(\eps,1-\eps)$ become further apart
on application of $\Psi_c$, when $c > 1$.
\end{remark}

\subsection{The Subcritical Case}\label{sectsubcrit}  Here we prove
Theorem \ref{thmcontract} under the additional assumption
that $c < 1$.  The proof in this case is considerably
simpler.  Further, it may shed light on the general
proof.

\begin{theorem}\label{n}  For any $c < 1$ and any
$\vec{x},\vec{y}\in D$
\beq\label{n1} TV(\Psi_{c}(\vec{x}),\Psi_{c}(\vec{y})) \leq c\cdot TV(\vec{x},\vec{y}).
\eeq 
\end{theorem}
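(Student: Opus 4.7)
\medskip

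\noindent\textbf{Proof plan for Theorem \ref{n}.}

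The plan is to prove the inequality by exhibiting an explicit coupling of the two output distributions $\Psi_c(\vec{x})$ and $\Psi_c(\vec{y})$, and then bounding the probability that the coupled pair disagrees by elementary calculation. By \eqref{tv inequality}, any such coupling yields an upper bound on the total variation distance.

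First I would fix, once and for all, an \emph{optimal} coupling $(J_x, J_y)$ of the single-draw distributions $\vec{x}$ and $\vec{y}$ on $\Sigma$, as described before \eqref{optimal coupling}; this satisfies $P[J_x \neq J_y] = TV(\vec{x}, \vec{y})$. Next, to couple $\Psi_c(\vec{x})$ with $\Psi_c(\vec{y})$, I would generate a single $Z \sim \mathrm{Poisson}(c)$ that plays the role of the number of rootchildren in both trees, and for each of the $Z$ children draw an independent copy of $(J_x, J_y)$. This produces two count vectors $\vec{n}^{(x)}$ and $\vec{n}^{(y)}$ on $\Sigma$ (truncated at $\omega = k$), giving outputs $\Gamma(\vec{n}^{(x)})$ and $\Gamma(\vec{n}^{(y)})$ respectively, which are then distributed as $\Psi_c(\vec{x})$ and $\Psi_c(\vec{y})$ by the construction of $\Psi_c$.

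The key observation is that if every coupled child-pair $(J_x^{(i)}, J_y^{(i)})$ agrees, then $\vec{n}^{(x)} = \vec{n}^{(y)}$, and hence $\Gamma(\vec{n}^{(x)}) = \Gamma(\vec{n}^{(y)})$. Conditional on $Z = n$, the probability that all $n$ coupled child-pairs agree is $(1 - TV(\vec{x}, \vec{y}))^n$. Setting $t := TV(\vec{x}, \vec{y})$ and taking expectation over $Z \sim \mathrm{Poisson}(c)$, I use the Poisson probability generating function:
\[
P[\text{all children agree}] = \sum_{n=0}^{\infty} e^{-c} \frac{c^n}{n!}(1-t)^n = e^{-c} e^{c(1-t)} = e^{-ct}.
\]
Consequently, by \eqref{tv inequality},
\[
TV(\Psi_c(\vec{x}), \Psi_c(\vec{y})) \leq P[\Gamma(\vec{n}^{(x)}) \neq \Gamma(\vec{n}^{(y)})] \leq 1 - e^{-ct}.
\]
Finally, applying the elementary inequality $1 - e^{-u} \leq u$ for $u \geq 0$ gives $1 - e^{-ct} \leq ct = c \cdot TV(\vec{x}, \vec{y})$, which is the claim.

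There is no real obstacle here: the argument is essentially a product-form coupling that exploits both the independence of the Ehrenfeucht assignments across children and the Poisson moment generating function. It is worth noting that the inequality $TV(\Psi_c(\vec{x}), \Psi_c(\vec{y})) \leq 1 - e^{-c \cdot TV(\vec{x},\vec{y})}$ is valid for \emph{all} $c > 0$, but only yields a strict contraction (with factor strictly less than $1$) in one application of $\Psi_c$ when $c < 1$; this is precisely why the supercritical case of Theorem \ref{thmcontract} requires iterating $\Psi_c$ a number of times $s$ and a quite different argument.
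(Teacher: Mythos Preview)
Your proof is correct and follows essentially the same coupling argument as the paper: both generate a common Poisson number of children and couple each child's label optimally so that a disagreement at the root forces a disagreement at some child. The only difference is bookkeeping---the paper applies a union bound conditionally on $Z=n$ to get $n\cdot t$ and then takes the Poisson mean to obtain $c\cdot t$, whereas you compute the exact conditional agreement probability $(1-t)^n$, average to $e^{-ct}$, and then relax $1-e^{-ct}\le ct$; your intermediate inequality is in fact slightly sharper.
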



\begin{proof} 
The main idea is to use suitable coupling of $\vec{x}$ and $\vec{y}$. First we fix $s \in \mathbb{N}$. We then create two pictures. In both pictures, let $v$ have $s$ many children $v_{1}, \ldots v_{s}$. In picture $1$, we assign, mutually independently, labels $X_{1}, \ldots X_{s} \in \Sigma$ to $v_{1}, \ldots v_{s}$ respectively, with $X_{i} \sim \vec{x}, 1 \leq i \leq s$. In picture $2$, we assign, again mutually independently, labels $Z_{1}, \ldots Z_{s} \in \Sigma$ to $v_{1}, \ldots v_{s}$, with $Z_{i} \sim \vec{y}, 1 \leq i \leq s$. The pairs $(X_{i}, Z_{i}), 1 \leq i \leq s$ are mutually independent, but for every $i$, $(X_{i}, Z_{i})$ is coupled so that
\begin{equation} \label{coupling subcritical}
P[X_{i} \neq Z_{i}] = TV(\vec{x}, \vec{y}).
\end{equation}
Suppose $X_{v}$ is the label of the root $v$ in picture $1$ that we get from the recursion function $\Gamma$ (from \eqref{defrec}), and $Z_{v}$ that in picture $2$. Then $X_{v} \sim \Psi_{c}(\vec{x}), \quad Z_{v} \sim \Psi_{c}(\vec{y})$. 
\begin{align}
TV(\Psi_{c}(\vec{x}), \Psi_{c}(\vec{y})) \leq & P[X_{v} \neq Z_{v}] \nonumber\\
\leq & \sum_{s=0}^{\infty} P[Poisson(c) = s] \sum_{i=1}^{s} P[X_{i} \neq Z_{i}] \nonumber\\
=& \sum_{s=0}^{\infty} P[Poisson(c) = s] \cdot s \cdot TV(\vec{x}, \vec{y}) \nonumber\\
=& \lambda \cdot TV(\vec{x}, \vec{y}). \nonumber
\end{align}
\end{proof}

\begin{theorem} 
Theorem \ref{thmcontract} holds when $c < 1$. 
\end{theorem}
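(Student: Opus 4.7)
The plan is to obtain the Euclidean contraction of Theorem \ref{thmcontract} as a corollary of the total variation contraction already established in Theorem \ref{n}, by iterating the latter and then paying the (harmless) finite cost of switching metrics.

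First I would iterate Theorem \ref{n}: since $\Psi_c$ maps $D$ into $D$, applying the bound $s$ times yields
\[ TV\bigl(\Psi_c^{s}(\vec{x}), \Psi_c^{s}(\vec{y})\bigr) \leq c^{s}\, TV(\vec{x},\vec{y}) \]
for every $\vec{x},\vec{y}\in D$. Because $c<1$, the factor $c^{s}$ can be driven below any prescribed positive threshold by taking $s$ large.

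Next I would compare $\rho$ with $TV$ on the finite-dimensional space $\mathbb{R}^m$ containing $D$. The standard inequalities $\|v\|_{2}\leq \|v\|_{1}\leq \sqrt{m}\,\|v\|_{2}$ translate, via the definitions $\rho(\vec{x},\vec{y})=\|\vec{x}-\vec{y}\|_{2}$ and $TV(\vec{x},\vec{y})=\tfrac{1}{2}\|\vec{x}-\vec{y}\|_{1}$, into
\[ \rho(\vec{x},\vec{y}) \leq 2\,TV(\vec{x},\vec{y}) \leq \sqrt{m}\,\rho(\vec{x},\vec{y}). \]
Combining the two displayed estimates,
\[ \rho\bigl(\Psi_c^{s}(\vec{x}),\Psi_c^{s}(\vec{y})\bigr) \leq 2\,TV\bigl(\Psi_c^{s}(\vec{x}),\Psi_c^{s}(\vec{y})\bigr) \leq 2c^{s}\,TV(\vec{x},\vec{y}) \leq c^{s}\sqrt{m}\,\rho(\vec{x},\vec{y}). \]

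It now suffices to choose $s$ so large that $\alpha := c^{s}\sqrt{m} < 1$; explicitly, any integer $s > \frac{\log m}{2\log(1/c)}$ will do. With this $s$ and $\alpha$, inequality (\ref{m1}) holds for all $\vec{x},\vec{y}\in D$, which is precisely Theorem \ref{thmcontract}. There is no real obstacle here beyond bookkeeping: the content is all in Theorem \ref{n}, and the conversion from the $L^{1}$-flavored $TV$ to the $L^{2}$ metric $\rho$ costs only the finite factor $\sqrt{m}$, which is absorbed by taking enough iterations of the already-contracting $\Psi_c$.
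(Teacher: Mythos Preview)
Your proof is correct and follows essentially the same approach as the paper: iterate Theorem \ref{n} to get $TV(\Psi_c^s(\vec{x}),\Psi_c^s(\vec{y}))\leq c^s\,TV(\vec{x},\vec{y})$, then use the equivalence of $L^1$ and $L^2$ norms on $\mathbb{R}^m$ to convert to $\rho$, and finally choose $s$ large enough to absorb the constant. The only cosmetic difference is that the paper records the constant as $2c^s\sqrt{m}$ rather than your sharper $c^s\sqrt{m}$, which is immaterial.
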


\begin{proof}
The inequalities
\beq\label{m4}
|\vec{z}|_1 \geq |\vec{z}|_2 \geq m^{-1/2}|\vec{z}|_1
\eeq
bound the $L^1$ and $L^2$ norms on $\mathbb{R}^m$ by multiples of each other.
As $TV(\vec{x},\vec{y}) = \frac{1}{2}|\vec{x}-\vec{y}|_1$,
\beq\label{m2}
TV(\vec{x},\vec{y}) \geq \frac{1}{2}\rho(\vec{x},\vec{y}) \geq  m^{-1/2}\cdot TV(\vec{x},\vec{y}).
\eeq

\par Applying Theorem \ref{n} repeatedly,
\beq\label{n2} TV(\Psi_{c}^s(\vec{x}),\Psi_{c}^s(\vec{y})) \leq 
c^s\cdot TV(\vec{x},\vec{y}).
\eeq 
Combining (\ref{m2} and \ref{n2})
\beq\label{n3} 
\rho(\Psi_{c}^s(\vec{x}),\Psi_{c}^s(\vec{y})) \leq 2\cdot TV(\Psi_{c}^s(\vec{x}),\Psi_{c}^s(\vec{y})) 
\leq 2c^s\cdot TV(\vec{x},\vec{y}) \leq 
2c^s\sqrt{m}\rho(\vec{x},\vec{y}).
\eeq
We select $s$ so that $2c^s\sqrt{m}<1$ and set $\ah=2c^s\sqrt{m}$.
\end{proof}

\section{Universality}\label{sectuniversality}

We define a function $Rad[i]$ on the nonnegative integers by
the recursion
\beq\label{defrad}
Rad(0) = 0 \mbox{ and } Rad(i+1)=3R(i)+1 \mbox{ for } i\geq 0.
\eeq

\begin{defn}\label{defrho}
In $T$ we define a distance $\rho(v,w)$ to be the minimal $r$ for which
there is a sequence $v=z_0,z_1,\ldots,z_r=w$ where each $z_{i+1}$ is
either the parent {\em or} a child of $z_i$.  We set $\rho(v,v)=0$.
\end{defn}

As an example, cousins would be at distance four.

\begin{defn}\label{defball} For $r$ a nonnegative integer, $v\in T$,
the ball of radius $r$ around $v$, denoted $B(v,r)$ is the set of $w\in T$
with $\rho(v,w)\leq r$.  We consider $v$ a designated vertex of $B(v,r)$.
\end{defn}

We define an equivalence relation, depending on $k$, on such balls.  

\begin{defn}\label{defequiv} 
$B(v,r)\equiv_k B(v',r)$ if the two sets
satisfy the same first order sentences of quantifier depth at most
$k-1$ with $v,v'$ as designated vertices, allowing $\pi$, $=$, and
$\rho$.
\end{defn}

\begin{remark}
Note that the $(k-1)$-round Ehrenfeucht game with $v,v'$ designated
is identical to the $k$-round Ehrenfeucht game in which the first round is
mandated to select $v,v'$.
\end{remark}

\par Equivalently, $B(v,r)\equiv_k B(v',r)$ if Duplicator wins the $k$-move
Ehrenfeucht game on these sets in which the first round is mandated to
be $v,v'$ and Duplicator must preserve $\pi$, $=$ and $\rho$.  The
distance function $\rho$ could be replaced by the 
binary predicates $\rho_i(w_1,w_2): \rho(w_1,w_2)=i$, $0\leq i\leq 2r$.
As this is a {\em finite} number of predicates the number of equivalence
classes is finite.  Let $\Sig_k^{BALL}$ denote the set of equivalence classes.

\begin{defn}\label{defdisj}
We say $S_1,S_2\subset T$ are {\em strongly
disjoint} if there are no $v_1\in S_1$, $v_2\in S_2$ with $\rho(v_1,v_2)\leq 1$.
\end{defn}

\begin{defn}\label{deffull}  
We say $T$ is $k$-{\em full} if for any
$v_1,\ldots,v_{k-1}\in T$ and any $\sig\in\Sig_k^{BALL}$ there exists
a vertex $v$ such that
\ben
\item $B(v,Rad(k-1))$ is in equivalence class $\sig$.
\item $B(v,Rad(k-1))$ is strongly disjoint from all $B(v_i,Rad(k-1))$.
\item $B(v,Rad(k-1))$ is strongly disjoint from $B(R,Rad(k))$, $R$ the root.
\een
\end{defn}

When $T$ is $k$-full our next result shows that the truth value of 
first order sentences of quantifier depth at most $k$ is determined
by examining $T$ ``near" the root.  This ``inside-outside" strategy
is well known, see, for example, \cite{01}.

\begin{theorem}\label{onlytop}  
Let $T,T'$ with roots $R,R'$ both be $k$-full.
Suppose, as per Definition \ref{defequiv}, $B(R,Rad(k))\equiv_{k+1} B(R',Rad(k))$. 
Then $T,T'$ have the same $k$-Ehrenfeucht value as
given by Definition \ref{defev}.
\end{theorem}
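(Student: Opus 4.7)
The plan is to exhibit a winning strategy for Duplicator in the $k$-move game $EHR[T, T', k]$. Along the way Duplicator keeps the played vertices classified into \emph{inside} (clustered near the root or near one another) and \emph{outside} (pairwise well-separated), and maintains the invariant that after round $i$ the union of the balls $B(v, Rad(k-i))$ centered at the root and at the inside vertices in $T$ is $\equiv_{k-i}$, with all inside vertices designated, to its counterpart in $T'$, while for each outside center $v_j$ one has $B(v_j, Rad(k-i)) \equiv_{k-i} B(v_j', Rad(k-i))$, with all these outer balls strongly disjoint from each other and from the inside region. The hypothesis $B(R, Rad(k)) \equiv_{k+1} B(R', Rad(k))$ furnishes the base case: the one extra rank of quantifier depth is precisely what is needed to absorb the designation of $R, R'$ that is implicit in treating them as already played.

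At round $i+1$, suppose Spoiler plays $w \in T$ (the symmetric case is identical). Three cases arise. If $w$ lies within distance $Rad(k-i)$ of the inside region, Duplicator responds by playing one round of the Ehrenfeucht game on the inside-structure equivalence; the equivalence drops to rank $k-i-1$ and $w$ is added to the inside list. If $w$ lies inside an outside ball $B(v_j, Rad(k-i))$, Duplicator plays one round on that ball's equivalence to locate $w'$, and thereafter reclassifies $v_j$ and $w$ as inside. If $w$ is far from everything, Duplicator invokes $k$-fullness of $T'$ with centers $v_1', \ldots, v_i'$ to supply a $w' \in T'$ whose $Rad(k-1)$-ball lies in the same $\Sigma_k^{BALL}$-class as $B(w, Rad(k-1))$ and is strongly disjoint from all previously recorded balls and from $B(R', Rad(k))$; here $w, w'$ become a new outside pair. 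In each case the induced smaller-radius equivalences around the newly played vertices are extracted from the one extra round of the Ehrenfeucht game applied to whichever enveloping equivalence was used.

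The step I expect to require the most care, and hence be the main obstacle, is the arithmetic verification that the recurrence $Rad(i+1) = 3\,Rad(i) + 1$ supplies exactly the right slack. Specifically, when Spoiler's move $w$ is merged into the inside region or into a previously outside ball, one needs $B(w, Rad(k-i-1))$ to sit strictly inside the enveloping $Rad(k-i)$-ball so that inheriting ball-equivalence at the smaller radius is legitimate; and when $w$ starts a fresh outside cluster, one needs its new $Rad(k-i-1)$-ball to be strongly disjoint from every other ball currently on record. A short triangle-inequality calculation using $Rad(k-i) \geq 2\,Rad(k-i-1) + 1$ handles the former, while separating the "far from everything" threshold for Spoiler's move from the $Rad(k-i-1)$ fresh radius handles the latter. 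Once this bookkeeping is in place, after $k$ rounds the budget $k-i = 0$ is reached with the played vertices pairwise matched and with matching local first-order type, so Duplicator has won and therefore $EV[T] = EV[T']$.
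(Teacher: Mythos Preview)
Your proposal is correct and follows essentially the same inside--outside locality strategy as the paper: maintain equivalence of the components formed by the $Rad(i)$-balls around the root and the played vertices, use the recursion $Rad(i+1)=3\,Rad(i)+1$ to ensure a nearby Spoiler move has its smaller ball absorbed into an existing component, and invoke $k$-fullness for a far Spoiler move. The paper's write-up collapses your three cases into two (treating ``inside the root region'' and ``inside an outside ball'' uniformly as ``Inside: distance $\leq 2\,Rad(i-1)+1$ from some previously chosen vertex''), but the underlying argument is the same.
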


\begin{proof}
Let $T,T'$ satisfy the condition of Theorem \ref{onlytop}.
We give a strategy for Duplicator to win the $k$-move Ehrenfeucht game.  For convenience
we add a move zero in which the roots $R,R'$ are selected.
Suppose $i$ moves remain.  Consider the union of the balls of radius
$Rad(i)$ about the chosen vertices (including the root) in each tree.  
These split into components.  Duplicator shall insure that the
corresponding chosen vertices are in the same components and that
the components are equivalent.  At the start, with $i=k$, this is
true by assumption, because $B(R, Rad(k)) \equiv_{k} B(R', Rad(k))$.  Suppose this holds with $i$ moves remaining
and Spoiler selects (the other case being symmetric) $v\in T$.
There are two cases.
\\ {\tt Inside:}  $v$ is at distance at most $2Rad(i-1)+1$ from a
previously selected $v_s$.  Then its ball of radius $Rad(i-1)$ lies
entirely inside (from the recursion (\ref{defrad})) the ball of 
radius $Rad(i)$ around $v_s$. Duplicator then considers the
equivalent component in $T'$ and moves the corresponding $v'$.
\\ {\tt Outside:}  Now the ball of radius $Rad(i-1)$ about $v$
lies in a separate component from the balls of radius $Rad(i-1)$
about the previously chosen vertices.  As $T'$ is $k$-full,
Duplicator selects $v'\in T'$ satisfying the conditions of
Definition \ref{deffull}.
\par In either case Duplicator continues the property.  At
the end of the game there are zero moves left.  The union
of the balls of radius zero, the vertices selected, are 
equivalent in $T,T'$ and Duplicator has won.
\end{proof}

\vspace{0.1in}

\begin{defn}
\textbf{Christmas Tree:} We replace the complex notion of $k$-full
by a simpler sufficient condtion.  For each $\sig\in\Sig_k^{BALL}$
create $k$ copies of a ball in that class.  Take a root vertex
$v$ and on it place $k\cdot |\Sig_k^{BALL}|$ disjoint paths
(parent to child) of length $Rad(k)+Rad(k-1)+1$.  Make each endpoint
the top of one of these copies. 
\end{defn}

\begin{defn}\label{defunivk}
The $k$-universal tree, denoted $UNIV_k$,
is the Christmas Tree defined above.
\end{defn}

\begin{defn}\label{suniv}  
$T$ is called $s$-universal (given a fixed
positive integer $k$) if all $T'$ with $T|_{s} \cong T'|_{s}$ have the same
$k$-Ehrenfeucht value. Thus $EV[T]$ is determined by $T|_{s}$ completely.
\end{defn}

\begin{theorem}\label{xmas}  
If for some $v$, $T(v)\cong UNIV_k$ then $T$
is $k$-full.  Thus, by Theorem \ref{onlytop}, the $k$-Ehrenfeucht value
of $T$ is determined by $T|_{Rad(k)}$, or in other words, $T$ is $Rad(k)$-universal.
\end{theorem}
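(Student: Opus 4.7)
The plan is to establish $k$-fullness of $T$ directly from the Christmas Tree structure of $T(v)\cong UNIV_k$, and then to deduce $Rad(k)$-universality as an immediate corollary of Theorem \ref{onlytop}.

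\medskip

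\noi \emph{Step 1 ($k$-fullness via pigeonhole).} Fix arbitrary $v_1,\ldots,v_{k-1}\in T$ and any class $\sig\in\Sig_k^{BALL}$. By the construction of $UNIV_k$, inside $T(v)$ there are $k$ distinguished vertices $w_1,\ldots,w_k$, each serving as the designated top of a distinct copy of a $\sig$-class ball, so that $B(w_j,Rad(k-1))$ lies in class $\sig$ for every $j$. Distinct $w_j,w_{j'}$ are joined only through $v$ via disjoint paths of length $Rad(k)+Rad(k-1)+1$, so $\rho(w_j,w_{j'})=2(Rad(k)+Rad(k-1)+1)$. Call $v_i$ a \emph{spoiler} of $w_j$ if $B(v_i,Rad(k-1))$ fails to be strongly disjoint from $B(w_j,Rad(k-1))$; unpacking Definition \ref{defdisj}, this is equivalent to $\rho(v_i,w_j)\leq 2Rad(k-1)+1$. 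If one vertex $v_i$ spoiled two distinct candidates $w_j,w_{j'}$, the triangle inequality would force $\rho(w_j,w_{j'})\leq 4Rad(k-1)+2$; substituting the recursion $Rad(k)=3Rad(k-1)+1$ yields $\rho(w_j,w_{j'})=8Rad(k-1)+4$, a contradiction. Hence each $v_i$ spoils at most one candidate, so by pigeonhole at least $k-(k-1)=1$ of the $w_j$ survives as a legitimate witness for Definition \ref{deffull}. Strong disjointness of the surviving $B(w_j,Rad(k-1))$ from $B(R,Rad(k))$ comes from the same path-length choice: every $w_j$ sits at distance at least $Rad(k)+Rad(k-1)+1$ below $R$, which leaves a gap beyond the two radii involved.

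\medskip

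\noi \emph{Step 2 ($Rad(k)$-universality).} Since $R$ has no ancestors, the ball $B(R,Rad(k))$ viewed as a structure with designated vertex coincides with $T|_{Rad(k)}$ rooted at $R$. Thus if $T,T'$ both satisfy the hypothesis of the theorem and $T|_{Rad(k)}\cong T'|_{Rad(k)}$, then Step 1 renders both $k$-full and $B(R,Rad(k))\equiv_{k+1} B(R',Rad(k))$ holds trivially, so Theorem \ref{onlytop} yields $EV[T]=EV[T']$. This is exactly the assertion of Definition \ref{suniv} with $s=Rad(k)$.

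\medskip

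\noi The main technical hurdle is the distance bookkeeping in the pigeonhole step. The recursion $Rad(k)=3Rad(k-1)+1$ was tailored precisely to make two distinguished endpoints farther apart ($\sim 8Rad(k-1)$) than twice the diameter of any spoiling ball ($\sim 4Rad(k-1)$); everything else is routine unpacking of Definitions \ref{defdisj} and \ref{deffull}, together with an invocation of Theorem \ref{onlytop}.
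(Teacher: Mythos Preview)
The paper states Theorem \ref{xmas} without proof (it is followed only by two remarks), so there is no argument in the paper to compare against. Your proposal supplies exactly the reasoning the Christmas Tree construction is built for: the $k$ copies per class together with the path length $Rad(k)+Rad(k-1)+1$ are calibrated so that (a) distinct designated vertices $w_j,w_{j'}$ are at distance roughly $8Rad(k-1)$, too far for a single $v_i$ to lie within $2Rad(k-1)+1$ of two of them, and (b) every $w_j$ sits deep enough that $B(w_j,Rad(k-1))$ clears $B(R,Rad(k))$. The pigeonhole in Step 1 is the intended mechanism, and Step 2 is precisely the ``Thus, by Theorem \ref{onlytop}'' of the statement.

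One point worth flagging in Step 2: Definition \ref{suniv} quantifies over \emph{all} $T'$ with $T'|_{Rad(k)}\cong T|_{Rad(k)}$, but Theorem \ref{onlytop} needs both $T$ and $T'$ to be $k$-full, and your Step 2 tacitly restricts to such $T'$. This restriction is already implicit in the paper's own phrase ``by Theorem \ref{onlytop},'' and in every downstream use (Theorem \ref{rapidfirst}, Theorem \ref{thmuniv}, and Section \ref{sectproofcontraction}) the comparison tree is $k$-full as well, so the gap is harmless for the paper's purposes; but strictly speaking neither the paper's sentence nor your Step 2 establishes $Rad(k)$-universality in the full generality of Definition \ref{suniv}.
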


\begin{remark}  
Many other trees could be used in place of $UNIV_k$, we
use this particular one only for specificity.
\end{remark}

\begin{remark}  A subtree $T(v)$, where $v$ is not the root, cannot determine the Ehrenfeucht value of
$T$ as, for example, it cannnot tell us if the root has, say, precisely 
two children. Containing this universal tree $UNIV_k$ tells us everything
about the Ehrenfecuht value of $T$ {\em except} properties relating to
the local neighborhood of the root.
\end{remark}

\section{Rapidly Determined Properties}

We consider the underlying probability space for the GW tree
$T=T_c$ to be an infinite sequence $X_1,X_2,\ldots$ of independent
variables, each Poisson with mean $c$.  These naturally create a
tree.  Let the root have $X_1$ children.  Now we go through the
nodes in a breadth first manner.  Let the $i$-th node (the root
is the first node) have $X_i$ children.  This creates a unique
rooted tree.  Note, however, that when the tree is finite with,
say, $n$ nodes, then the values $X_j$ with $j > n$ are irrelevant.
In that case we say that the process {\em aborts} at time $n$.

We employ a useful notation of Donald Knuth.

\begin{defn}\label{deffairly}  
We say an event occurs \emph{quite surely} if the probability
that it does not occur drops exponentially in the given parameter.
\end{defn}

\begin{defn}\label{defrapidly}  
Let $A$ be any property or function of rooted trees.  We
say that $A$ is {\em rapidly determined} if quite surely (in $s$, with
$T=T_c$ and $c$ given)
$X_1,\ldots X_s$ tautologically determine $A$.
\end{defn}

\begin{remark}
Consider the property that $T$ is infinite and suppose $c > 1$.
Given $X_1,\ldots,X_s$ if the tree has stopped then we know it is finite.
Suppose however (as holds with positive limiting probability) after $X_1,\ldots X_s$
the tree is continuing.  If at that stage there are many nodes we can be reasonably certain that $T$ will be infinite, but we cannot be tautologically sure.  This
property is {\em not} rapidly determined.
\end{remark}

\begin{remark} 
In this work we restrict the language in which $A$ is expressed.
It has been suggested that another approach would be
to restrict $A$ to rapidly determined properties.
\end{remark}  

\begin{theorem}\label{findfinite}  Let $T_0$ be an arbitrary finite tree.  Let $A$ be the
(non first order) property that {\em either} the process has aborted by
time $s$ or there
exists $v\in T$ with $T(v)\cong T_0$.  Then $A$ is rapidly determined in parameter $s$.
\end{theorem}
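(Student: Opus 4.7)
The plan is to observe that $A$ is tautologically determined by $X_1, \ldots, X_s$ in precisely two scenarios: (a) the process has aborted by BFS step $s$ (so the whole tree is revealed and $A$ is known true), or (b) there is some $v_i$ with $i \leq s$ whose entire subtree is contained in $\{v_1, \ldots, v_s\}$ and $T(v_i) \cong T_0$ (so $A$ is true via this explicit witness). Outside these two cases the continuation $X_{s+1}, X_{s+2}, \ldots$ can render $A$ either true or false, so $A$ is not determined. Set $m = |T_0|$, $h_0 = \text{depth}(T_0)$, and $p_0 = P_c[T_c \cong T_0]$; note $p_0 > 0$ because Poisson$(c)$ has full support on $\mathbb{Z}_{\geq 0}$, so every finite rooted tree arises with positive $P_c$-probability. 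The task then reduces to bounding $P_c[\neg(a) \wedge \neg(b)] \leq e^{-\Omega(s)}$.

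For $c < 1$ the classical subcritical Galton-Watson tail bound gives $P_c[|T| > s] \leq e^{-\Omega(s)}$ outright, so scenario (a) already holds quite surely.

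For $c \geq 1$ I would work conditionally on non-abortion. Let $d^{*}$ be the deepest level all of whose vertices have been processed by step $s$, so that $\sum_{\ell \leq d^{*}} Z_\ell \leq s < \sum_{\ell \leq d^{*}+1} Z_\ell$ (writing $Z_\ell$ for the number of depth-$\ell$ vertices). Any vertex $v$ at depth $\leq d^{*} - h_0$ with $T(v) \cong T_0$ has all its descendants at depths $\leq d^{*}$, so it is automatically fully explored inside $\{v_1, \ldots, v_s\}$. By the branching property the subtrees rooted at the $Z_{d^{*} - h_0}$ vertices of depth $d^{*} - h_0$ are iid copies of $T_c$, each $\cong T_0$ with probability $p_0$; conditionally, the chance that none is $\cong T_0$ is $(1 - p_0)^{Z_{d^{*} - h_0}}$. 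It therefore suffices to show $Z_{d^{*} - h_0} \geq \beta s$ quite surely for some constant $\beta = \beta(c, T_0) > 0$. For $c > 1$ this follows from Kesten-Stigum convergence together with the standard supercritical-GW large-deviation estimates once we condition on survival; the residual event ``extinction but $|T| > s$'' has probability $e^{-\Omega(s)}$ via the dual subcritical process.

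The hard part will be the critical case $c = 1$, where $P_1[|T| > s] \sim s^{-1/2}$ decays only polynomially and the level widths of a critical Galton-Watson tree conditioned on being large are typically of order $\sqrt{|T|}$ rather than linear in $|T|$. Obtaining the required exponential-in-$s$ concentration here will demand a more delicate argument, most naturally via the Brownian-excursion / Aldous' CRT description of conditioned critical trees, coupled with an exponential concentration estimate for the count of fringe $T_0$-subtrees among the first $s$ BFS vertices and then integration against the $n^{-3/2}$ tail of $|T|$.
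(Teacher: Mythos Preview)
The paper does not actually prove this theorem; it cites \cite{MPJS} for the full argument and offers only a two-line sketch. That sketch, however, takes a route quite different from yours and --- crucially --- is \emph{uniform in $c$}, with no case split. The idea is to work with BFS indices rather than depths: pick a small $\eps = \eps(c,d)$, where $d$ is the depth of $T_0$. If the process has not aborted by step $s$, then (i) quite surely every depth-$d$ descendant of each of the first $\lfloor \eps s\rfloor$ BFS nodes still has index at most $s$, and (ii) quite surely some $i\leq \eps s$ already has $T(i)\cong T_0$. Point (i) is elementary: the children of node $i$ have BFS index at most $1 + X_1 + \cdots + X_i$, a sum of i.i.d.\ Poisson($c$) variables, so one Chernoff bound shows the children of the first $\eps s$ nodes lie among the first roughly $c\eps s$ nodes, with failure probability exponentially small in $s$. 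Iterating $d$ times and taking $\eps$ so that (say) $(2c)^d\eps < 1$ handles (i) for every $c>0$ simultaneously; point (ii) then follows because one has order $s$ disjoint candidate subtrees to test.

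Your proposal, by contrast, splits into three regimes and leaves the critical case $c=1$ explicitly unfinished. The machinery you propose for it (Brownian excursion, Aldous' CRT, fringe-subtree concentration integrated against the $n^{-3/2}$ tail) is massive overkill for what is, in the paper's framing, a Chernoff bound on sums of i.i.d.\ Poisson variables; nothing about $c=1$ is special once you track BFS indices instead of level sizes. There is also a more technical gap in your $c>1$ argument: you treat the subtrees rooted at depth $d^* - h_0$ as i.i.d.\ copies of $T_c$, but $d^*$ is itself a function of those same subtrees (it is the largest $L$ with $\sum_{\ell\leq L} Z_\ell \leq s$), so conditioning on the value of $d^*$ destroys the independence you invoke. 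This is repairable --- replace $d^*$ by a deterministic level, or union-bound over its value --- but the cleanest fix is simply to abandon the level-based viewpoint and use the BFS-index argument sketched above.
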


The proof is given in \cite{MPJS}.  Let $T_0$ have depth $d$.
Roughly speaking, when we examine $X_1,\ldots,X_s$ either the process
has aborted or it has not.  If not, quite surely some $i\leq s\eps$
has $T(i)\cong T_0$.  Here $\eps$ is chosen small enough (dependent on $c,d$) so that 
quite surely the children of all $i\leq s\eps$ down $d$ generations. 

\begin{theorem}\label{rapidfirst}  Every first order property $A$ is rapidly
determined.
\end{theorem}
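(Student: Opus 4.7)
The plan is to combine Theorem \ref{findfinite} with the universality statement of Theorem \ref{xmas}. Let $k$ be the quantifier depth of $A$. By Theorem \ref{xmas}, whenever $T$ contains any subtree $T(v) \cong UNIV_k$, the Ehrenfeucht value $EV[T]$ — and hence the truth value of $A$ — is tautologically determined by the constant-depth cutoff $T|_{Rad(k)}$. So the strategy is to show that two good events hold quite surely in $s$: one that exhibits a $UNIV_k$ somewhere in $T$, and one that fully exposes $T|_{Rad(k)}$.

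For the first, let $E_s$ be the event that either the BFS process aborts by time $s$, or some explored vertex $v$ satisfying $T(v) \cong UNIV_k$ is fully revealed within $X_1, \ldots, X_s$. Theorem \ref{findfinite} applied to $T_0 = UNIV_k$ gives $\Pr[E_s^c] \leq e^{-\Omega(s)}$. For the second, let $F_s$ be the event that every vertex of $T$ at depth at most $Rad(k)$ appears among the first $s$ BFS-indices, so that $T|_{Rad(k)}$ is a function of $X_1,\ldots,X_s$. Since $Rad(k)$ is a constant depending only on $k$, the population through generation $Rad(k)$ of a $Poisson(c)$ Galton-Watson process has an exponentially decaying tail — the moment generating function $\mathbb{E}[e^{\theta Z}] = e^{c(e^{\theta}-1)}$ iterated $Rad(k)$ times stays finite near zero — so $\Pr[F_s^c] \leq e^{-\Omega(s)}$ as well.

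On $E_s \cap F_s$ the truth value of $A$ is tautologically determined by $X_1,\ldots,X_s$: either the tree has aborted and is entirely known, or we have both a verified $UNIV_k$-subtree and a completely revealed $T|_{Rad(k)}$, in which case Theorem \ref{xmas} delivers $EV[T]$ and thus $A$. A union bound gives $\Pr[(E_s \cap F_s)^c] \leq e^{-\Omega(s)}$, which is exactly the ``quite surely'' of Definition \ref{defrapidly}. The main obstacle I anticipate is bookkeeping rather than ideas: one has to verify carefully that the indicator of ``$E_s \cap F_s$ holds and determines $A$'' is itself a function of $X_1,\ldots,X_s$ alone — in particular that the BFS procedure has processed every parent of a depth-$Rad(k)$ vertex, and that the isomorphism $T(v) \cong UNIV_k$ is fully confirmed within the first $s$ values — so that $A$'s truth value is genuinely read off from the observed prefix rather than from the unseen tail.
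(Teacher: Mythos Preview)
Your proposal is correct and follows the paper's line exactly: apply Theorem \ref{findfinite} with $T_0 = UNIV_k$, then invoke Theorem \ref{xmas} to reduce the determination of $EV[T]$ to $T|_{Rad(k)}$. The one difference is that you bound $\Pr[F_s^c]$ separately via an iterated moment generating function, whereas the paper simply observes that confirming a copy $T(i)\cong UNIV_k$ within $X_1,\ldots,X_s$ already forces the BFS to have processed every vertex of depth $\le Rad(k)$ (since $UNIV_k$ itself has depth $Rad(k)+Rad(k-1)+1 > Rad(k)$), so your event $F_s$ is implied by the non-aborted case of $E_s$ and the extra tail bound is harmless but unnecessary.
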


\begin{proof}
Let $A$ have quantifier depth $k$.  Let $T_0$ be the universal
tree $UNIV_k$ as given by Theorem \ref{xmas}.  From Theorem \ref{findfinite}
if $T$ has not aborted by time $s$ then quite surely some $T(i)\cong T_0$.
But then $T$ is already $k$-full and already has depth at least $Rad(k)$.
By Theorem \ref{onlytop} the $k$-Ehrenfeucht value of $T$, hence the
truth value of $A$, is determined solely by $T|_{Rad(k)}$, and hence tautologically by $X_{1}, \ldots X_{s}$.
\end{proof}

\begin{theorem}\label{rapidsuniv}  Fix a positive integer $k$.  Let
$T\sim T_c$.  Then quite surely (in $s$), $T$ is $s$-universal.
\end{theorem}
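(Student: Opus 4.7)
The plan is to combine Theorem \ref{findfinite} (applied with $T_{0} = UNIV_{k}$) with Theorems \ref{xmas} and \ref{onlytop}. Let $d^{*}$ denote the depth of $UNIV_{k}$, a constant depending only on $k$. The strategy is to show that, quite surely in $s$, $T$ contains an exact copy of $UNIV_{k}$ as a subtree $T(v)$ whose vertices all lie at depth strictly less than $s$; this will force $s$-universality.

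First I would apply Theorem \ref{findfinite} to $T_{0} = UNIV_{k}$ with parameter $\lfloor s/2 \rfloor$. That theorem yields, quite surely in $s$, either that the breadth-first process has aborted by time $\lfloor s/2 \rfloor$, or that there exists $v \in T$ of BFS index $i \leq \epsilon \lfloor s/2 \rfloor$ with $T(v) \cong UNIV_{k}$, where $\epsilon = \epsilon(c, d^{*})$ is the constant from the proof of Theorem \ref{findfinite}, chosen so that all descendants of such an $i$ down $d^{*}$ generations also lie among the first $\lfloor s/2 \rfloor$ BFS nodes. In particular, every vertex of $T(v)$ sits at depth at most $\lfloor s/2 \rfloor - 1$ in $T$, hence strictly less than $s$ for all large $s$.

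Next I would fix an arbitrary $T'$ with $T'|_{s} \cong T|_{s}$ and argue $EV[T'] = EV[T]$. In the aborted case, $T$ is finite and $T|_{s} = T$ has all leaves at depth $< s$; any corresponding leaf of $T'|_{s}$ must then be a genuine leaf of $T'$, because a child would sit at depth $\leq s$ and therefore appear in $T'|_{s}$. Hence $T' \cong T$ and $EV[T'] = EV[T]$. In the non-aborted case, the vertex $v' \in T'$ matched to $v$ under the truncation isomorphism has its $T'|_{s}$-subtree isomorphic to $T(v) \cong UNIV_{k}$, and the same ``no hidden children'' observation (since the leaves of that subtree sit at depth $< s$) forces $T'(v') \cong UNIV_{k}$. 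Theorem \ref{xmas} now furnishes $k$-fullness of both $T$ and $T'$. Since the root-balls $B(R, Rad(k)) = T|_{Rad(k)}$ and $B(R', Rad(k)) = T'|_{Rad(k)}$ are isomorphic whenever $Rad(k) \leq s$ (which holds for all large $s$), they are certainly $\equiv_{k+1}$-equivalent as balls with designated root. Theorem \ref{onlytop} then yields $EV[T] = EV[T']$, so $T$ is $s$-universal.

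The main obstacle is the depth control on $v$. Theorem \ref{findfinite} is phrased in terms of BFS indices rather than tree depth, so one must carefully translate the ``quite sure'' appearance of $UNIV_{k}$ at small BFS index into a bound on the depth of its entire subtree, and confirm that the choice of $\epsilon$ in the proof sketch of Theorem \ref{findfinite} really places all of $T(v)$ strictly within $T|_{s}$. This tight depth bound is exactly what rules out spurious descendants of $v'$ in $T'$ that would otherwise destroy the isomorphism $T'(v') \cong UNIV_{k}$ needed to invoke Theorem \ref{xmas}.
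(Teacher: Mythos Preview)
Your proposal is correct and follows essentially the same route as the paper: the paper simply invokes Theorem~\ref{rapidfirst} (whose proof already packages together \ref{findfinite}, \ref{xmas}, and \ref{onlytop}) and then notes that $X_{1},\ldots,X_{s}$ is determined by $T|_{s}$, since the first $s$ BFS nodes all lie at depth $\leq s-1$. Your careful BFS-index-to-depth translation and the ``no hidden children'' argument are exactly the content of that last observation, so you are unpacking the same proof rather than giving a different one; the halving to $\lfloor s/2\rfloor$ is harmless but unnecessary.
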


Theorem \ref{rapidfirst} gives that the $k$-Ehrenfeucht value of $T$
is quite surely determined by $X_1,\ldots,X_s$.  When this is so
it is tautologically determined by $T|_{s}$, which has more information.

\section{Unique Fixed Point}\label{sectuniquefixed}

\begin{theorem}\label{uniquefixed}  
The map $\Psi_c:D \ra D$ has
a {\em unique} fixed point.  
\end{theorem}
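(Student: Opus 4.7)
The plan is to derive uniqueness as an immediate corollary of Theorem \ref{thmcontract} via the Banach fixed point theorem. The first step is to verify the hypotheses: $D$, being the standard probability simplex, is a closed and bounded subset of $\mathbb{R}^m$, and hence a complete metric space under the Euclidean metric $\rho$. Theorem \ref{thmcontract} then furnishes, for the given $c > 0$, a positive integer $s$ and a constant $\ah < 1$ such that $\rho(\Psi_c^s(\vec{x}), \Psi_c^s(\vec{y})) \leq \ah \, \rho(\vec{x}, \vec{y})$ on all of $D$; in other words $\Psi_c^s$ is a strict contraction of $(D, \rho)$.

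The second step is Banach's theorem, which produces a unique $\vec{z} \in D$ with $\Psi_c^s(\vec{z}) = \vec{z}$. Theorem \ref{j} tells us that $\vec{x}(c)$ is a fixed point of $\Psi_c$, so iterating gives $\Psi_c^s(\vec{x}(c)) = \vec{x}(c)$ and the uniqueness of $\vec{z}$ forces $\vec{z} = \vec{x}(c)$. If now $\vec{y} \in D$ is any fixed point of $\Psi_c$, then $\Psi_c^s(\vec{y}) = \vec{y}$ as well, so $\vec{y} = \vec{z} = \vec{x}(c)$. This closes the argument.

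I do not expect this deduction itself to be the hard step; the entire substance of the theorem is packed inside Theorem \ref{thmcontract}, whose supercritical proof in Section \ref{sectproofcontraction} (leaning on the $k$-universal tree of Section \ref{sectuniversality} and the rapidly determined properties developed afterward) is the genuine main obstacle. An alternative route that bypasses Banach would instead invoke Theorem \ref{rapidsuniv}: quite surely in $s$, the tree $T_c$ is $s$-universal, so in the probabilistic interpretation of $\Psi_c^s(\vec{x})$ the labels planted at generation $s$ influence the root's Ehrenfeucht value with probability tending to zero. This yields $\Psi_c^s(\vec{x}) \to \vec{x}(c)$ uniformly in $\vec{x} \in D$, whence any fixed point of $\Psi_c$ must coincide with $\vec{x}(c)$.
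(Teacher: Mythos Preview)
Your primary argument via Banach's fixed point theorem is correct, and indeed the paper explicitly remarks (immediately after its own proof) that Theorem~\ref{uniquefixed} follows from the more powerful Theorem~\ref{thmcontract}. However, the paper's actual proof takes the lighter route you sketch at the end as an ``alternative'': it invokes only Theorem~\ref{rapidsuniv}, couples $\Psi_c^s(\vec{y})$ and $\Psi_c^s(\vec{z})$ by generating a common $T_c$ down to level $s$ and then assigning independent labels from $\vec{y}$ and $\vec{z}$ at the leaves, and observes that when $T_c$ is $s$-universal the root value is insensitive to those labels. This gives $TV(\Psi_c^s(\vec{y}),\Psi_c^s(\vec{z}))\leq f(s)$ where $f(s)=\Pr[T_c\text{ not }s\text{-universal}]\to 0$; applied to two fixed points, it forces equality. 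The difference matters for the paper's logical order: Section~\ref{sectuniquefixed} precedes Section~\ref{sectproofcontraction}, so the paper deliberately proves uniqueness without relying on the full contraction machinery. Your Banach route is cleaner once Theorem~\ref{thmcontract} is in hand, but it inverts that dependency; your ``alternative'' is in fact the paper's chosen proof.
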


\begin{proof}
Let $f(s)$ be the probability that $T_c$ is {\em not}
$s$-universal. For any $\vec{y},\vec{z}\in D$ we couple $\Psi_{c}^s(\vec{y}),
\Psi_{c}^s(\vec{z})$. Create $T_c$ down to generation $s$ and then give
each node at generation $s$ a $\sig\in \Sig$ with independent 
distribution $\vec{y}$, respectively $\vec{z}$.  Then $\Psi_{c}^s(\vec{y}),
\Psi_{c}^s(\vec{z})$ will be the induced state of the root.  But when
$T_c$ is $s$-universal this will be the same for any $\vec{y},\vec{z}$.
Hence $TV[\Psi_{c}^s(\vec{y}),\Psi_{c}^s(\vec{z})] \leq f(s)$.  When $\vec{y},\vec{z}$
are fixed points of $\Psi$, $TV[\vec{y},\vec{z}]\leq f(s)]$.  As
$f(s)\ra 0$, $\vec{y}=\vec{z}$.
\end{proof}

\begin{remark}
Theorem \ref{uniquefixed} will also follow from the more powerful Theorem \ref{thmcontract}.
\end{remark}

\begin{remark} 
It is a challenging exercise to show directly that the
solution $x$ to (\ref{k2}) or the solution $x,y$ to the 
system (\ref{l1} and \ref{l2}) are unique.
\end{remark}

\section{A Proof of Contraction}\label{sectproofcontraction}
\subsection{A Two Stage Process}\label{secttwostage}
Here we prove Theorem \ref{thmcontract} for arbitrary $c$.  Let $D_{0}$ be the depth of $UNIV_k$, as given by Definition \ref{defunivk}.
We shall set
\beq\label{sets}
s=s_0+D_{0}
\mbox{ with } s_0\geq   2\cdot Rad(k)
\eeq
and think of $T|_{s}$ as being generated in two stages.
In Stage 1 we generate $T|_{s_0}$.  From Theorem \ref{rapidsuniv}, by taking
$s_0$ large,  this will be $s_0$-universal with probability near one.
In Stage 2 we begin with an arbitrary but fixed $T_0$ of depth at most $s_0$.  
(We say ``at most" because it includes the possibility that $T_0$ has
no vertices at depth $s_0$.)
From each
node at depth $s_0$, mutually independently, we generate a GW-tree down to depth $D_{0}$.  We
denote by $Ext(T_0)$ this random tree, now of depth (at most) $s$.

\begin{defn}\label{defbad}  
For any $T_0$ of depth at most $s_0$,
$BAD[T_0]$ is the event that  $Ext(T_0)$ is {\em not} $s_{0}$-universal. 
\end{defn}

\begin{theorem}\label{thmuniv}  There exists positive $\beta$ such that for
any $T_0$ of depth at most $s_0$
\beq\label{probbad}  \Pr(BAD[T_0]) \leq e^{-t\beta}  \eeq 
where $t$ denotes the number of nodes of $T_0$ at depth $s_0$.
\end{theorem}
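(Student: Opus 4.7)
My plan is to leverage Theorem \ref{xmas} as the only structural ingredient: the presence of even a single vertex $v$ in $Ext(T_0)$ with $T(v) \cong UNIV_k$ already forces $Ext(T_0)$ to be $Rad(k)$-universal, and hence $s_0$-universal, since $s_0 \geq 2\cdot Rad(k) \geq Rad(k)$ and agreement on $s_0$-cutoffs tautologically implies agreement on $Rad(k)$-cutoffs. Consequently $BAD[T_0]$ is contained in the event that no vertex of $Ext(T_0)$ satisfies $T(v) \cong UNIV_k$, which in particular is contained in the event that no depth-$s_0$ node of $T_0$ has that property.

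First I would enumerate the depth-$s_0$ nodes of $T_0$ as $v_1, \ldots, v_t$. By the construction of $Ext(T_0)$ in Section \ref{secttwostage}, the subtrees $T(v_i) \subset Ext(T_0)$ are i.i.d.\ copies of a $Poisson(c)$ Galton--Watson tree generated down to generation $D_0$, which is precisely the depth of $UNIV_k$. I would then set
\[
p_0 := \Pr[T(v_i) \cong UNIV_k].
\]
Because $UNIV_k$ is a fixed finite rooted tree, this event factors tautologically as: at each non-leaf vertex of $UNIV_k$ the Poisson process delivers exactly the prescribed number of children (and at leaves of $UNIV_k$ sitting above depth $D_0$ it delivers zero). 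This is a finite product of strictly positive Poisson probabilities of the form $e^{-c}c^n/n!$, so $p_0 = p_0(c,k) > 0$, uniformly in $T_0$.

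The conclusion then follows from a one-line independence calculation across the Stage 2 extensions:
\[
\Pr[BAD[T_0]] \;\leq\; \Pr[\text{no } v_i \text{ has } T(v_i) \cong UNIV_k] \;=\; (1 - p_0)^t \;=\; e^{-t\beta},
\]
with $\beta := -\log(1-p_0) > 0$, depending only on $c$ and $k$. The case $t=0$ is trivial since the right-hand side is $1$.

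I do not foresee a genuine obstacle, but the one point requiring care is that Theorem \ref{xmas} demands \emph{exact} isomorphism $T(v) \cong UNIV_k$, not merely that $UNIV_k$ appear as a substructure of $T(v)$. Identifying such a $v$ specifically among the depth-$s_0$ leaves of $T_0$, where the Stage~2 extensions are i.i.d.\ fresh $Poisson(c)$ branching processes of the correct depth, is what makes the relevant event a clean product of Poisson probabilities and so yields a uniform positive lower bound $p_0$; any attempt to use vertices inside $T_0$ itself (or to mix the probabilities across generations) would destroy either the independence or the uniformity in $T_0$.
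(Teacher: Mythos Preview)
Your proof is correct and follows essentially the same approach as the paper: enumerate the depth-$s_0$ nodes $v_1,\ldots,v_t$, observe that each independently has some fixed probability $p_0>0$ of generating exactly $UNIV_k$, and bound $\Pr(BAD[T_0])$ by $(1-p_0)^t=e^{-t\beta}$. You supply more detail than the paper does---in particular the explicit justification that $p_0>0$ as a finite product of Poisson point probabilities, and the reduction from $Rad(k)$-universality (via Theorem~\ref{xmas}) to $s_0$-universality using $s_0\geq Rad(k)$---but the argument is the same.
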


\begin{proof} 
Let $v_1,\ldots,v_t$  denote the nodes of $T_0$ at generation $s_0$.
Each of them independently generates a GW tree.  Let $1-e^{-\beta}$
denote the probability that $T(v_i)\cong UNIV_k$.  With probability
$e^{-t\beta}$ no $T(v_i)\cong UNIV_k$.  But otherwise $Ext(T_0)$ is
$s_{0}$-universal.
\end{proof}

\subsection{Splitting the Extension}

Let $T_0$ be an arbitrary tree of depth $s_0$. Let $\vec{x}\in D$.  Assign to the depth $s$ nodes of $Ext(T_0)$ independent
identically distributed labels $j \in \Sig$ taken from distribution $\vec{x}$.
Applying the recursion function $\Gamma$ of Definition \ref{def1} repeatedly up the generations yields a unique Ehrenfeucht value for the root $R$.  

\begin{defn}\label{defpsiT0}
$\Psi_{c}^s(T_0,\vec{x})$ denotes the induced distribution of the Ehrenfeucht value for
the root $R$ as derived in the description above.
\end{defn}

\begin{theorem}\label{thmtwostage}
\beq\label{aaa} TV(\Psi_{c}^s(\vec{x}),\Psi_{c}^s(\vec{y})) 
\leq 
\sum \Pr(T|_{s_{0}} = T_0)\cdot TV(\Psi_{c}^s(T_0,\vec{x}),
\Psi_{c}^s(T_0,\vec{y}))
\eeq
where the sum is over all $T_0$ of depth (at most) $s_0$.
\end{theorem}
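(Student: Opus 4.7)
The plan is to exhibit $\Psi_c^s(\vec{x})$ (and similarly $\Psi_c^s(\vec{y})$) as a mixture of the distributions $\Psi_c^s(T_0,\vec{x})$ indexed by the possible realizations of $T|_{s_0}$, and then invoke the convexity of total variation distance under mixing.

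First I would verify the mixture identity
\beq\label{mixture}
\Psi_c^s(\vec{x}) \;=\; \sum_{T_0} \Pr(T|_{s_0}=T_0)\cdot \Psi_c^s(T_0,\vec{x}),
\eeq
where the sum ranges over trees $T_0$ of depth at most $s_0$. This is precisely the two-stage description of Section \ref{secttwostage}: by the branching property of the Galton--Watson process, conditional on $T|_{s_0}=T_0$, the tree $T|_{s}$ is distributed exactly as $Ext(T_0)$, since each depth-$s_0$ vertex of $T_0$ independently sprouts a fresh $T_c$ truncated at depth $D_0$. Labeling the depth-$s$ vertices independently from $\vec{x}$ and propagating upward via the recursion function $\Gamma$ then produces, by Definition \ref{defpsiT0}, the conditional distribution $\Psi_c^s(T_0,\vec{x})$ for the Ehrenfeucht value of the root. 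Averaging over $T_0$ recovers the unconditional distribution $\Psi_c^s(\vec{x})$, giving (\ref{mixture}); the same identity holds with $\vec{y}$ in place of $\vec{x}$, using the same weights $\Pr(T|_{s_0}=T_0)$.

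Second, I would apply the elementary convexity of total variation: for any probability weights $\{p_{T_0}\}$ and any families $\{\mu_{T_0}\},\{\nu_{T_0}\}$ of distributions on $\Sigma$,
\beq\label{tvconv}
TV\!\left(\sum_{T_0} p_{T_0}\,\mu_{T_0},\ \sum_{T_0} p_{T_0}\,\nu_{T_0}\right)
\;\leq\; \sum_{T_0} p_{T_0}\,TV(\mu_{T_0},\nu_{T_0}),
\eeq
which follows at once from (\ref{tvdef}) and the triangle inequality applied inside the sum $\tfrac12\sum_j|\cdot|$. Plugging $p_{T_0}=\Pr(T|_{s_0}=T_0)$, $\mu_{T_0}=\Psi_c^s(T_0,\vec{x})$, $\nu_{T_0}=\Psi_c^s(T_0,\vec{y})$ into (\ref{tvconv}) and using (\ref{mixture}) on each side yields exactly (\ref{aaa}).

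No significant obstacle is anticipated: the content of the theorem is bookkeeping, separating the randomness of $T|_{s_0}$ from the subsequent label-and-propagate step, combined with the standard convexity inequality for TV. The only point that deserves a careful sentence is that the procedure of Definition \ref{defpsiT0} really does commute with conditioning on $T|_{s_0}$, which in turn is just the Markov/branching property of the GW tree together with the fact that the labels at depth $s$ are assigned independently of everything above.
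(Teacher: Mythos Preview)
Your proposal is correct and follows essentially the same approach as the paper: both decompose $\Psi_c^s(\vec{x})$ as a mixture over the possible realizations $T_0$ of $T|_{s_0}$ and then use the convexity of total variation under common mixing weights. The paper's proof is a one-sentence sketch of exactly this idea, whereas you have spelled out the mixture identity and the convexity step explicitly; there is no substantive difference in method.
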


\begin{proof}
We split the distribution of $T$ into the distribution of $Ext(T_0)$,
with probability $\Pr[T|_{s_{0}}=T_0]$, over each $T_0$ of depth (at most) $s_0$.
\end{proof}

\subsection{Some Technical Lemmas}

Let $X=X(c,s)$ be the number of 
children at generation $s$ of the GW tree $T=T_c$.  Let $Y$ be the
sum of $t$ independent copies of $X$.  The next result (not the best
possible) is that the tail of $Y$ is bounded by exponential decay in 
$t$.

\begin{lemma}\label{lem1}  There exists $\beta > 0$ and $y_0$ such that for $y\geq y_0$
\beq\label{cher1}
\Pr[Y \geq yt] \leq e^{-yt\beta}.
\eeq
\end{lemma}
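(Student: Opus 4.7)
The plan is to prove this via a standard Chernoff bound, once we verify that the moment generating function of $X$ is finite on a neighborhood of zero (in fact on all of $\mathbb{R}$).

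First I would establish that $M(\lambda) := E[e^{\lambda X}] < \infty$ for every $\lambda \in \mathbb{R}$. Let $Z_i$ be the generation-$i$ population, so $X = Z_s$, and write $M_i(\lambda)=E[e^{\lambda Z_i}]$. Conditioning on $Z_1$ and using that each of its children independently founds a $\mathrm{Poisson}(c)$ GW tree of depth $s-1$, together with the Poisson MGF formula, gives the recursion
\beq
M_i(\lambda) = \exp\bigl(c(M_{i-1}(\lambda)-1)\bigr), \qquad M_0(\lambda)=e^{\lambda}.
\eeq
Since $f(z)=e^{c(z-1)}$ is entire and maps $\mathbb{R}$ to $\mathbb{R}$, an easy induction on $i\leq s$ shows $M_i(\lambda)<\infty$ for every real $\lambda$. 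In particular, $M(\lambda)=M_s(\lambda)$ is finite for every $\lambda > 0$.

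Next, fix any $\lambda>0$ (say $\lambda=1$ for concreteness). Writing $Y=X_1+\cdots+X_t$ with the $X_i$ i.i.d.\ copies of $X$, independence gives $E[e^{\lambda Y}]=M(\lambda)^t$. By Markov's inequality,
\beq
\Pr[Y\geq yt] \;\leq\; e^{-\lambda yt}\,M(\lambda)^t \;=\; \exp\!\Bigl(t\bigl(\log M(\lambda)-\lambda y\bigr)\Bigr).
\eeq
So if we pick any $\beta$ with $0<\beta<\lambda$ and set $y_0=\log M(\lambda)/(\lambda-\beta)$, then for $y\geq y_0$ we have $\log M(\lambda)-\lambda y \leq -\beta y$, and hence $\Pr[Y\geq yt]\leq e^{-y t \beta}$ as required. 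The concrete choice $\lambda=1$, $\beta=1/2$, $y_0=2\log M(1)$ works.

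The only substantive step is verifying finiteness of $M(\lambda)$, which is where the Poisson assumption is doing the work: a heavier-tailed offspring distribution would not in general have an everywhere-finite MGF after $s$-fold iteration. Once that is in hand, the rest is a one-line Chernoff calculation; indeed the lemma could be strengthened from ``for $y\geq y_0$" to an exponential bound for all $y\geq E[X]+\delta$ by optimising over $\lambda$, but the weaker statement above is all that is needed downstream.
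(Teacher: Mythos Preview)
Your proof is correct and follows essentially the same Chernoff-bound route as the paper: both take $\lambda=1$, bound $\Pr[Y\geq yt]\leq e^{(\log M(1)-y)t}$, and conclude with $\beta=\tfrac{1}{2}$ and $y_0=2\log M(1)$. The only difference is that you explicitly supply, via the recursion $M_i(\lambda)=\exp(c(M_{i-1}(\lambda)-1))$, the ``standard argument'' for $E[e^X]<\infty$ which the paper merely invokes.
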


\begin{proof}
Set $f(\lam)=\ln[E[e^{X\lam}]]$.  We employ Chernoff bounds
suboptimally, taking simply $\lam = 1$.  (We require here a standard argument
that $E[e^X]$ is finite.)
Then $E[e^Y]= e^{t\cdot f(1)}$  and
\beq\label{cher2}
\Pr[Y\geq yt] \leq E[e^Y]e^{-yt} \leq e^{(f(1)-y)t}.  \eeq
For $y\geq 2f(1)$, $f(1)-y \leq -y/2$ and we may take $\beta=\frac{1}{2}$.
\end{proof}

\begin{lemma}\label{lem2}  Let $K,\gam > 0$.  Let $BAD$ be an event with
$Pr[BAD] \leq Ke^{-t\gam}$.  Then, for a positive constants $k,\kappa$,
\beq\label{cher3}     
E[Y \ \mathbf{1}_{BAD}] \leq  kte^{-t\kappa},  \eeq
where $Y$ is as defined above.
\end{lemma}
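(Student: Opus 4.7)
The plan is to combine the tail bound in Lemma \ref{lem1} with the exponentially small probability of $BAD$ via a truncation argument. Let $y_0$ and $\beta$ be the constants supplied by Lemma \ref{lem1}. I split
$$E[Y\,\mathbf{1}_{BAD}] = E[Y\,\mathbf{1}_{BAD \cap \{Y \leq y_0 t\}}] + E[Y\,\mathbf{1}_{BAD \cap \{Y > y_0 t\}}]$$
and bound each piece separately.

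On the truncated event $\{Y \leq y_0 t\}$ I use the trivial bound $Y \leq y_0 t$ together with the hypothesis $\Pr[BAD] \leq K e^{-t\gamma}$ to obtain
$$E[Y\,\mathbf{1}_{BAD \cap \{Y \leq y_0 t\}}] \leq y_0 t \cdot K e^{-t\gamma}.$$
On the complementary event I simply drop the indicator of $BAD$ and use the layer-cake identity, valid for any nonnegative $Z$ and $a \geq 0$: $E[Z\,\mathbf{1}_{Z>a}] = a\Pr[Z>a] + \int_a^\infty \Pr[Z>u]\,du$. Taking $Z=Y$ and $a=y_0 t$, Lemma \ref{lem1} (applied with $y=u/t \geq y_0$) gives $\Pr[Y > u] \leq e^{-u\beta}$, so
$$E[Y\,\mathbf{1}_{Y > y_0 t}] \leq y_0 t\, e^{-y_0 t \beta} + \int_{y_0 t}^\infty e^{-u\beta}\,du = y_0 t\, e^{-y_0 t\beta} + \beta^{-1} e^{-y_0 t \beta}.$$

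Combining the two pieces and absorbing the $\beta^{-1} e^{-y_0 t \beta}$ term into the $t$-prefactor for $t \geq 1$, I obtain
$$E[Y\,\mathbf{1}_{BAD}] \leq \bigl(y_0 K + y_0 + \beta^{-1}\bigr)\, t\, e^{-t\min(\gamma,\, y_0\beta)},$$
so taking $\kappa = \min(\gamma, y_0\beta)$ and $k = y_0 K + y_0 + \beta^{-1}$ yields the claim.

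There is no serious obstacle here: the only real design choice is whether to use this split-at-$y_0 t$ argument (which invokes only Lemma \ref{lem1}) or to apply Cauchy--Schwarz, $E[Y\,\mathbf{1}_{BAD}] \leq \sqrt{E[Y^2]\cdot \Pr[BAD]}$, combined with the fact that $E[Y^2] = O(t^2)$ for fixed $c,s$. I prefer the truncation route because it avoids needing any further second-moment estimate for the generation-$s$ population of $T_c$, and it mirrors the style of the Chernoff argument already used to prove Lemma \ref{lem1}.
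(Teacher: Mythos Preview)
Your proof is correct and follows essentially the same truncation strategy as the paper: split at a threshold of the form $y_1 t$, bound the low part by $y_1 t\cdot\Pr[BAD]$, and bound the high part by dropping $\mathbf{1}_{BAD}$ and using an exponential tail for $Y$. The only difference is cosmetic: the paper redoes the Chernoff argument with a freshly chosen cutoff $y_1=\max\{y_0,(\gamma+\varphi_s(\lambda))/\lambda\}$ so that the tail integral comes out as $\lambda^{-1}e^{-\gamma t}$, whereas you simply reuse Lemma~\ref{lem1} at the cutoff $y_0 t$ to get $\beta^{-1}e^{-y_0 t\beta}$; both lead to the same final form $kte^{-t\kappa}$ with $\kappa=\min(\gamma,y_0\beta)$ (resp.\ $\min(\gamma,y_1\beta)$).
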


\begin{remark}  
In the worst case the event $BAD$ would coincide with the
top probability $Ke^{-t\gam}$ in the distribution of $Y$.
\end{remark}

\begin{proof}[Proof of Lemma \ref{lem2}]
 



We split $Y$ into $Y < y_{1}t$ and $Y \geq y_{1}t$, where $y_{1}$ needs to be chosen suitably. 
\begin{equation} \label{split}
E[Y \ \mathbf{1}_{BAD}] = E[Y \ \mathbf{1}_{BAD} \ \mathbf{1}_{Y < y_{1}t}] + E[Y \ \mathbf{1}_{BAD} \ \mathbf{1}_{Y \geq y_{1}t}],
\end{equation}
where the first term is bounded by $y_{1} t P[BAD] \leq K y_{1} t e^{-t\gam}$. The second term is bounded above by $E[Y \ \mathbf{1}_{Y \geq y_{1}t}]$, which we use Chernoff type argument to bound. First, recall that $Y$ is the sum of $t$ i.i.d.\ copies of $X = X(c, s)$ which is the number of nodes at generation $s$ of $T_{c}$. Suppose $\varphi_{s}$ denotes the cumulant-generating function of $X$, defined as
\begin{equation} \label{cumulant}
\varphi_{s}(\lambda) = \log E[e^{\lambda X}]. 
\end{equation}
\par We fix some $\lambda > 1$ and choose 
\begin{equation} \label{suitable y1}
y_{1} = \max\left\{y_{0}, \frac{\gamma + \varphi_{s}(\lambda)}{\lambda}\right\},
\end{equation}
where $\gamma$ is as in the bound of $P[BAD]$. From \eqref{cher1} and \eqref{suitable y1}, we then have
\begin{align}
E[Y \ \mathbf{1}_{Y \geq y_{1}t}] \leq & y_{1} t P[Y \geq y_{1}t] + \sum_{k = \lfloor y_{1} t \rfloor + 1}^{\infty} P[Y \geq k] \nonumber\\
\leq & y_{1} t e^{-y_{1} t \beta} + E\left [e^{\lambda Y}\right] \int_{y_{1} t}^{\infty} e^{-\lambda x} dx \nonumber\\
=&  y_{1} t e^{-y_{1} t \beta} + e^{\varphi_{s}(\lambda) t} \frac{1}{\lambda} e^{-\lambda y_{1} t} \nonumber\\
\leq & y_{1} t e^{-y_{1} t \beta} + \frac{1}{\lambda} e^{-\gamma t}. \nonumber
\end{align} 
The desired bound now follows easily, by choosing $\kappa = \min\left\{y_{1} \beta, \gamma\right\}$ and $k = K y_{1} + y_{1} + 1$.

\end{proof}

\subsection{Bounding Expansion}

\begin{theorem}\label{thmxy}  There exists $K_0$ (dependent only on $s_0,k$)
such that for {\em any} $T_0$ and any $\vec{x},\vec{y}\in D$
\beq\label{contracteq}
TV(\Psi_{c}^{s}(T_0,\vec{x}), \Psi_{c}^{s}(T_0,\vec{y})) \leq K_0\cdot TV(\vec{x},\vec{y}).
\eeq
\end{theorem}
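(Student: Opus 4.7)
The plan is to couple the label assignments at the depth-$s$ vertices of $Ext(T_0)$ and to exploit the following observation: whenever the random extension $Ext(T_0)$ happens to be $s_0$-universal in the sense of Definition \ref{suniv}, the root's Ehrenfeucht value is determined by $Ext(T_0)|_{s_0}=T_0$ alone, so every way of attaching subtrees (equivalently, every choice of labels) below depth $s_0$ yields the same root EV. Concretely, first generate $Ext(T_0)$; then, independently across the depth-$s$ vertices $w$ and independently of the tree structure, draw pairs $(J_w, J'_w)$ from the optimal coupling of $\vec{x}$ and $\vec{y}$, so that $\Pr[J_w \neq J'_w] = TV(\vec{x}, \vec{y})$. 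Propagate $\Gamma$ up each coordinate to obtain root labels $R, R'$ whose marginals are $\Psi_c^s(T_0, \vec{x})$ and $\Psi_c^s(T_0, \vec{y})$.

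Let $N$ denote the number of depth-$s$ vertices of $Ext(T_0)$ and $L = \#\{w : J_w \neq J'_w\}$. The $s_0$-universality observation gives the inclusion $\{R \neq R'\} \subseteq BAD[T_0] \cap \{L \geq 1\}$, so
\beq
TV(\Psi_c^s(T_0, \vec{x}), \Psi_c^s(T_0, \vec{y})) \leq \Pr[R \neq R'] \leq E[L \cdot \mathbf{1}_{BAD[T_0]}].
\eeq
Because $BAD[T_0]$ depends only on the structure of $Ext(T_0)$ and is thus independent of the label coupling, while $L \mid N \sim \mathrm{Binomial}(N, TV(\vec{x}, \vec{y}))$, the right-hand side factors as $TV(\vec{x}, \vec{y}) \cdot E[N \cdot \mathbf{1}_{BAD[T_0]}]$.

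Now $N$ is precisely the sum of $t$ i.i.d.\ copies of $X(c, D_0)$, where $t$ is the number of depth-$s_0$ vertices of $T_0$, so Theorem \ref{thmuniv} combined with Lemma \ref{lem2} (applied with $K=1$ and $\gamma = \beta$) yields $E[N \cdot \mathbf{1}_{BAD[T_0]}] \leq k t e^{-t\kappa}$ for positive constants $k, \kappa$ depending only on $c$, $s_0$, and the quantifier depth. The map $t \mapsto k t e^{-t\kappa}$ on $[0, \infty)$ is bounded by $K_0 := k/(\kappa e)$, attained at $t = 1/\kappa$; the degenerate case $t = 0$ contributes nothing since then $Ext(T_0) = T_0$ has no depth-$s$ vertex at all and the two distributions literally coincide. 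Combining these bounds gives the claimed inequality with $K_0$ independent of $T_0$.

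The main obstacle is setting up the coupling carefully enough to guarantee that $L \mid N$ is genuinely binomial and is independent of $BAD[T_0]$, so that the expectation factorizes cleanly into the product $TV(\vec{x}, \vec{y}) \cdot E[N \cdot \mathbf{1}_{BAD[T_0]}]$. Once that independence is secure, the inclusion $\{R \neq R'\} \subseteq BAD[T_0] \cap \{L \geq 1\}$ reduces the problem to the ``size times rare event'' estimate already packaged in Lemma \ref{lem2}, and uniformity in $T_0$ follows from the elementary fact that $t e^{-t\kappa}$ is bounded in $t$.
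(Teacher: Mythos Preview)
Your proof is correct and follows essentially the same route as the paper: couple the depth-$s$ labels optimally, observe that on the complement of $BAD[T_0]$ the root EV is determined by $T_0$ alone, bound $\Pr[R\neq R']$ by $E[N\cdot\mathbf{1}_{BAD[T_0]}]\cdot TV(\vec{x},\vec{y})$, and then invoke Theorem~\ref{thmuniv} with Lemma~\ref{lem2} to get the uniform bound $kte^{-t\kappa}$. Your write-up is in fact a bit more careful than the paper's about the independence that makes the factorization $E[L\cdot\mathbf{1}_{BAD[T_0]}]=TV(\vec{x},\vec{y})\cdot E[N\cdot\mathbf{1}_{BAD[T_0]}]$ valid, and you give the explicit constant $K_0=k/(\kappa e)$ where the paper simply appeals to $te^{-t\kappa}\to 0$.
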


\begin{remark} 
 As $K_0$ may be large, Theorem \ref{thmxy}, by itself,
does not give a contracting mapping.  It does limit how expanding $\Psi_{c}^{s}(T_0,\cdot)$
can be.
\end{remark}

\begin{remark} 
Let  $t$ be the number of nodes of $T_0$ at depth $s_0$.
Let $TV(\vec{x},\vec{y})=\eps$.  The expected number of nodes in $Ext(T_0)$
at level $s=s_{0}+D_{0}$ is then $tK_1$ with $K_1 = c^{D_{0}}$.  
The methods of Theorem \ref{n} would then give Theorem \ref{thmxy} with $K=K_1 t$.
However, when $c>1$ this $K$ would be unbounded in $t$.  Our concern is then
with large $t$ though, technically, the proof below works for all $t$.\\
\end{remark}


\begin{proof}
Let $t$ be the number of nodes of $T_0$ at depth $s_0$.
Let $TV(\vec{x},\vec{y})=\eps$.  We again couple $\vec{x},\vec{y}$.
Let $Y$ be the number of nodes in
$Ext(T_0)$ at level $s$.  Given $Y=y$, let us name these vertices $u_{1}, \ldots u_{y}$. Again we create two pictures. In picture $1$, we assign, mutually independently, labels $X_{i} \in \Sigma$ to $u_{i}$, with $X_{i} \sim \vec{x}$, and in picture $2$, label $Z_{i} \sim \vec{y}$. $(X_{i}, Z_{i}), 1 \leq i \leq y$ mutually independent, but $X_{i}, Z_{i}$ are coupled so that 
\begin{equation}
P[X_{i} \neq Z_{i}] = TV(\vec{x}, \vec{y}) = \epsilon.
\end{equation}
The probability of the event that for at least one $i$, $X_{i} \neq Z_{i}$ is then bounded above by $y \cdot \epsilon$.
\par Suppose $X \in \Sigma$ is the label of the root of $Ext(T_{0})$ in picture $1$, and $Z$ that in picture $2$, determined by using the recursion function $\Gamma$ repeatedly upwards starting at level $s$. Then $X \sim \Psi_{c}^{s}(T_{0}, \vec{x}), \ Z \sim \Psi_{c}^{s}(T_{0}, \vec{y})$. 
\par Recall, from Definition \ref{defbad}, that $BAD[T_{0}]$ is the event that $Ext(T_{0})$ is not $s_{0}$-universal. If $GOOD[T_{0}] = BAD[T_{0}]^{c}$, then under $GOOD[T_{0}]$, the Ehrenfeucht value of $Ext(T_{0})$ is completely determined by $Ext(T_{0})|_{s_{0}}$, which is $T_{0}$ itself. And as $T_{0}$ is fixed, this means that $EV[Ext(T_{0})]$ is then independent of $\vec{x}, \vec{y}$. Thus
\begin{align}
TV(\Psi_{c}^{s}(T_{0}, \vec{x}), \Psi_{c}^{s}(T_{0}, \vec{y})) \leq & P[X \neq Z] \nonumber\\
\leq & \sum_{y=0}^{\infty} P[Y = y] \cdot y \cdot \epsilon \cdot \mathbf{1}_{BAD[T_{0}]} \nonumber\\
=& E[Y \ \mathbf{1}_{BAD[T_{0}]}] \epsilon. \nonumber
\end{align}
From Theorem \ref{thmuniv} and Lemma \ref{lem2},
\beq\label{qq2}
TV(\Psi_{c}^{s}(T_0,\vec{x}),\Psi_{c}^{s}(T_0,\vec{y})) 
\leq A(t) \eps  \mbox{ with } A(t) = kte^{-t\kappa}.  
\eeq
Here $A=A(t)$ approaches zero as $t\ra\infty$ and so there exists $K_0$ such that $A\leq K_0$ for {\em any} choice of $t$.
\end{proof}

\subsection{Proving Contraction}

We first show Theorem \ref{thmcontract} in terms of the $TV$ metric.  
Pick $s_0$ sufficiently large
so that, say, the probability that $T|_{s_{0}}$ is not $s_0$-universal is
at most  $(2K_0)^{-1}$, $K_0$ given by Theorem \ref{thmxy}. This can be done because of Theorem \ref{rapidsuniv}. Let
$\vec{x},\vec{y}\in D$ with $\eps=TV(\vec{x},\vec{y})$.
We bound $TV(\Psi_{c}^s(\vec{x}),\Psi_{c}^s(\vec{y}))$ by Theorem \ref{thmtwostage}.
Consider $TV(\Psi_{c}^s(T_0,\vec{x}),\Psi_{c}^s(T_0,\vec{y}))$. 
When $T_0$ is $s_0$-universal this has value zero.  Otherwise its value
is bounded by $K_0\eps$ by Theorem \ref{thmxy}.  Theorem \ref{thmtwostage}
then gives
\beq\label{zz5} 
TV(\Psi_{c}^s(\vec{x}),\Psi_{c}^s(\vec{y})) \leq \frac{1}{2K_0}K_0\eps 
\leq \frac{\eps}{2}.
\eeq

Finally, we switch to the $L^2$ metric $\rho$.
For $B$ a sufficiently large constant the inequaliteis (\ref{m4})
yield, say,
\beq\label{qq5}
\rho(\Psi_{c}^{sB}(\vec{x}),\Psi_{c}^{sB}(\vec{y}))\leq \frac{1}{2}\rho(\vec{x},\vec{y}). 
\eeq
Then Theorem \ref{thmcontract} is satisfied with $s$ replaced by $sB$ and $\ah=\frac{1}{2}$.

\section{Implicit Function}\label{sectimplicit}

Here we deduce Theorem \ref{e} and hence Theorem \ref{a}, that $\Pr[A]$ is
always a $C^{\infty}$ function of $c$.  This follows from three results:

\ben
\item\label{proba} The function $\Del(c,\vec{x})=\Psi_c(\vec{x})$ has all derivatives of
all orders.
\item\label{propb} For each $c > 0$ the function
\beq\label{zz1}  F(\vec{x}) = \Psi_c(\vec{x})-\vec{x} \eeq
has a unique zero $\vec{x}=\vec{x}(c)$.
\item\label{propc} The function $\Psi_c:D\ra D$ is contracting in the sense
of Theorem \ref{thmcontract}.
\een

Let $A$ be the Jacobian of $\Psi_c$ at $\vec{x}(c)$.   From Property \ref{propc}
all of the eigenvalues of $A$ lie inside the complex unit circle.  Then $A=I$
is the Jacobian of $F$ from Property \ref{propb}. Then $(A-I)^{-1} = -\sum_{u=0}^{\infty}A^u$
is a convergent sequence, and so $A-I$ is invertible,  As by Property \ref{proba} the
function $\Del$ is smooth, the Implicit Function Theorem gives that the fixed point
function $\vec{x}(c)$ of $F$ is $C^{\infty}$.

\end{document}